\newtheorem{theorem}{Theorem}
\newtheorem{lemma}[theorem]{Lemma}
\newtheorem{corollary}[theorem]{Corollary}
\newtheorem{proposition}[theorem]{Proposition}
\def\N{\mathbb{N}}
\def\R{\mathbb{R}}
\def\set#1{\left\{\, #1 \,\right\}}
\def\tq{\,\mid\,}
\def\abs #1{\vert \,#1\, \vert\,}
\def\norm #1{\Vert \,#1\, \Vert\,}
\def\R{\mathbb{R}}
\def\Leg{\mathcal{L}}
\def\calH{\mathcal{H}}
\def\calC{\mathcal{C}}
\def\calE{\mathcal{E}}
\begin{document}

\title{On weak KAM theory\\ for N-body problems}

\author{Ezequiel Maderna}

\date{July 27, $2006$}

\maketitle

\begin{abstract}
We consider N-body problems with homogeneous potential $1/r^{2\kappa}$ where $\kappa\in(0,1)$, including
the Newtonian case ($\kappa=1/2$). Given $R>0$ and $T>0$, we find a uniform upper bound for the minimal
action of paths binding in time $T$ any two configurations which are contained in some ball of radius $R$.
Using cluster partitions, we obtain from these estimates H\"{o}lder regularity of the critical action potential
(i.e. of the minimal action of paths binding in free time two configurations).
As an application, we establish the weak KAM
theorem for these N-body problems, i.e. we prove the existence of fixed points of
the Lax-Oleinik semigroup and we show that they are global viscosity solutions of the corresponding
Hamilton-Jacobi equation. We also prove that there are
invariant solutions for the action of isometries on the
configuration space.
\end{abstract}

\section{Introduction}

Let $E$ be a finite dimensional Euclidian space, and denote by
$x=(r_1,\dots,r_N)\in E^N$ the configuration vector of $N$
punctual masses $m_1,\dots,m_N>0$. By $\norm{x}$ we will denote
the norm given by $\max\set{\norm{r_i}_E \tq 1\le i\le N}$, and
$\abs{x}$ will denote the norm induced by the mass scalar product
$$<x,y>=\sum_{i=1}^N m_i<r_i,s_i>_E$$ for $x=(r_1,\dots,r_N)$,
$y=(s_1,\dots,s_N)\in E^n$. As usual, we call $I(x)=\abs{x}^2$ the
moment of inertia of $x$ regarding the origin of $E$. The
$N$-body problem is determined once the force function $U$ on $E^N$
(or potential function), negative of the potential energy, is
chosen. In this paper, we restrict us to the potential functions
which are homogeneous of degree $-2\kappa$
$$U_\kappa(x)=\sum_{i<j}m_im_j\,(r_{ij})^{\,-2\kappa}\,,$$ where
$r_{ij}=\norm{r_i-r_j}_E$, and $\kappa\in (0,1)$. The case
$\kappa=1/2$ corresponds to the Newtonian potential. In other words,
this means that the laws of motion are given on the open and dense
subset $\Omega = \set{x\in E^N\tq U_\kappa(x)<+\infty}$ by the
differential equation $\ddot x=\nabla U_\kappa$, where the gradient
is taken with respect to the mass scalar product on $E^N$. The
equivalent variational formulation is given by the Lagrangian
defined on $TE^N=E^N\times E^N$,
$$L(x,v)=\frac{1}{2}\sum_{i=1}^N m_i v_i^2+U_\kappa(x)\,,$$ where
$v=(v_1,\dots,v_n)$. Thus, motions are characterized as critical
points of the Lagrangian action $A(\gamma)=\int
L(\gamma(s),\dot\gamma(s))\,ds$, and the Euler-Lagrange equations
define a - non complete - analytical flow on the non compact
manifold $T\Omega$.

\subsection{Globally minimizing curves and the action potential.}

Let us give a precise definition of the Lagrangian action
functional. Recall that a curve $\gamma:[a,b]\to E^N$ is absolutely
continuous if it is differentiable almost everywhere, and its
derivative $\dot \gamma$ satisfies the fundamental theorem of
calculus for the Lebesgue integral. Thus the Lagrangian action is
well defined on the set of absolutely continuous curves $\calC$.
More precisely, the action is the function $A:\calC\to (0,+\infty]$
given by
$$A(\gamma)=\int_a^bL(\gamma(s),\dot\gamma(s))\,ds=
\frac{1}{2}\int_a^b\abs{\dot\gamma(s)}^2\,ds
+\int_a^bU_\kappa(\gamma(s))\,ds\,.$$ where $\abs{v}$ is the norm in
$E^N$ induced by the mass scalar product. It can be seen that
absolutely continuous curves with finite action are necessarily
$1/2$-H\"{o}lder continuous, hence they are contained in the Sobolev
space $H^1([a,b],E^N)$.

For $T>0$ and $x,y\in E^N$, denote by $\calC(x,y,T)$ the set of all
absolutely continuous curves $\gamma:[0,T]\to E^N$ which satisfy
$\gamma(0)=x$ and $\gamma(T)=y$. We are interested in the function
$\phi$ defined on $E^N\times E^N\times (0,+\infty)$ by
$$\phi(x,y,T)=\inf\set{A(\gamma)\tq \gamma\in\calC (x,y,T)}\,.$$ We
will say that a curve $\gamma:[a,b]\to E^N$ is globally minimizing,
if we have that $A(\gamma)=\phi(\gamma(a),\gamma(b),b-a)$. For a
curve defined on a non compact interval, globally minimizing will
mean that the property is satisfied for all restrictions of the
curve to a compact interval. It is not difficult to see that a
globally minimizing curve always exists for any two configurations
$x,y\in E^N$ and for all $T>0$. Essentially, it is a consequence of
the lower semi-continuity of the action functional.

In the last years, the global variational methods have been
successful to prove the existence of a great variety of particular
motions. A typical example is the eight choreography of Chenciner
and Montgomery \cite{Ch-Montgomery}, among many others closed orbits
with topological or symmetry constraints. The main difficulty that
raises from these methods for the Newtonian potential, and also for
the homogeneous potentials here considered, is the one to assure
that global minimizers avoid collisions, that is to say, that they
are contained in the open domain $\Omega$. Following an idea of
Marchal, Chenciner established a proof of this fact, for the
Newtonian N-body problem in the plane or the three-dimensional
space, see \cite{Ch2-ICM}, \cite{Marchal}. Simultaneously and independently,
Ferrario and Terracini gave an improved version of the Marchal's theorem, see \cite{Ferrario-Terracini}.
We will nowhere use this result in this paper, but it is relevant to remark that combined with
proposition \ref{fixpoints=calib} below, and using results from \cite{DaLuz-Maderna},
we can deduce (for the Newtonian case in dimension greater than one) the existence of completely
parabolic motions with arbitrary initial configuration. Recently, this last result was improved
in \cite{Maderna-Venturelli}.

Our first result gives an upper bound for the action of such curves
which depends on the size of the configurations. In our opinion,
this result is quite fundamental for global variational methods, and
it is optimal, in the sense that the bound is reached by homothetic
minimizing configurations, as we explain in the following section.

\begin{theorem}\label{upper.bound} There are positive constants
$\alpha,\;\beta\,>0$ such that for all $T>0$,
$$\phi(x,y,T)\leq \alpha\,T^{-1}R^2+\beta\,TR^{-2\kappa}\,,$$
whenever $x$ and $y$ are configurations contained in a ball of
radius $R>0$ of $E$. The constants $\alpha$ and $\beta$ only depend
on the degree of homogeneity of the potential ($-2\kappa$), the
number of bodies $N$, and their masses.
\end{theorem}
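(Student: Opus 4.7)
The plan is to exhibit a specific admissible path $\gamma\in\calC(x,y,T)$ whose action realizes the claimed bound. The naive affine interpolation $(1-s/T)x + (s/T)y$ already yields the right kinetic contribution of order $R^2/T$, but its potential integral can diverge because the segment may cross collision configurations. The remedy is to add a transverse ``spreading'' perturbation that vanishes at the endpoints.

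Fix once and for all a reference configuration $e=(e_1,\dots,e_N)\in E^N$ with $\|e_i-e_j\|_E \ge 1$ for $i\neq j$ and $\|e_i\|_E \le N$, for instance $e_i=iv$ for some unit vector $v\in E$. Pick an exponent $\theta\in(1/2,1/(2\kappa))$, a range that is non-empty precisely because $\kappa<1$; a convenient choice is $\theta=1/(1+\kappa)$. Let $\rho:[0,T]\to[0,\infty)$ be a smooth profile with $\rho(0)=\rho(T)=0$, behaving like $C(s/T)^\theta$ near $s=0$ and like $C((T-s)/T)^\theta$ near $s=T$, and reaching a plateau value $C$ on a middle interval $[T/3,2T/3]$, where $C\ge 4$ is a numerical constant. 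Set
\[
\gamma(s) \;=\; \left(1-\frac{s}{T}\right)x \;+\; \frac{s}{T}\,y \;+\; R\,\rho(s)\,e.
\]

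For the kinetic part, the inequality $(a+b)^2\le 2(a^2+b^2)$ gives $\frac{1}{2}\int_0^T|\dot\gamma|^2\,ds \le |y-x|^2/T + R^2|e|^2\int_0^T \dot\rho(s)^2\,ds$. The first summand is bounded by $4R^2\sum_i m_i/T$ since all particles of $x$ and $y$ lie in a common ball of radius $R$; the second is of order $1/T$ because $\dot\rho(s)^2\asymp (s/T)^{2\theta-2}/T^2$ near $s=0$ and $2\theta>1$ ensures integrability. This produces the first term $\alpha T^{-1}R^2$. For the potential, write $\gamma_i-\gamma_j = f_{ij}(s) + R\rho(s)(e_i-e_j)$ with $\|f_{ij}(s)\|_E\le 2R$ and $\|e_i-e_j\|_E\ge 1$. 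The reverse triangle inequality yields $\|\gamma_i-\gamma_j\|_E\ge R\rho(s)-2R$, which on the middle plateau ($\rho\equiv C\ge 4$) gives $\|\gamma_i-\gamma_j\|_E\ge 2R$ and hence an integrand bounded by $(2R)^{-2\kappa}$. On the two end slopes one still gets $\|\gamma_i-\gamma_j\|_E \gtrsim R(s/T)^\theta$ after choosing $e$ (possibly depending on $(x,y)$ but with $|e|$ controlled by $N$) so that $e_i-e_j$ is transverse to the affine span of $\{x_i-x_j, y_i-y_j\}$ for each pair $(i,j)$. Then $\int_0^{T/3}\|\gamma_i-\gamma_j\|_E^{-2\kappa}\,ds \lesssim R^{-2\kappa}T\int_0^{1/3}u^{-2\kappa\theta}\,du$, which is finite because $2\kappa\theta<1$. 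Summing over pairs delivers the second term $\beta TR^{-2\kappa}$.

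The main obstacle is precisely the potential estimate near the endpoints: the spreading term $R\rho(s)(e_i-e_j)$ vanishes exactly where the straight-line part $f_{ij}(s)$ may also be smallest, so the two effects must be balanced through a careful choice of the exponent $\theta$. The simultaneous requirement $2\theta>1$ (kinetic integrability of $\dot\rho^2$) and $2\kappa\theta<1$ (potential integrability once the distance scales as $(s/T)^\theta$) is realizable precisely under the hypothesis $\kappa<1$, and would collapse at $\kappa=1$; this is exactly where the degree-of-homogeneity assumption enters the construction.
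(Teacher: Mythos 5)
Your construction is sound in its kinetic estimate and on the middle plateau, but the step that carries all the difficulty --- the lower bound $\norm{\gamma_i-\gamma_j}_E\gtrsim R(s/T)^\theta$ on the slope regions --- rests on choosing $e$ so that $e_i-e_j$ is transverse to the span of $\{x_i-x_j,\,y_i-y_j\}$, and this is impossible when $\dim E\leq 2$ (for the orthogonal projection argument you implicitly need $e_i-e_j\notin\mathrm{span}(x_i-x_j,y_i-y_j)$, a subspace that can be all of $E$ in the planar case and always is in the collinear case). The theorem is stated for arbitrary $E$, and the collinear and planar problems are precisely the cases treated in the paper's examples. Here is a concrete failure: take $\dim E=1$, $N=2$, $\kappa=1/2$, $R=1$, $x=(0,\delta)$, $y=(\delta,0)$ with $\delta$ small. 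Then $f_{12}(s)=\delta(2s/T-1)$ changes sign, and whatever the sign of the scalar $d=e_1-e_2$, the function $g(s)=f_{12}(s)+\rho(s)d$ is strictly monotone on one of the two slope regions and passes from a value of one sign to a value of the other sign there; it therefore has a simple interior zero $s_0$ with $\abs{g(s)}\leq L\abs{s-s_0}$ nearby, so $\int\abs{g}^{-2\kappa}\,ds$ diverges for every $\kappa\geq 1/2$, including the Newtonian case. Your path then has infinite action, and no choice of $e$ repairs this. (For $\kappa<1/2$ a simple transversal crossing is integrable and your scheme could be salvaged; in $\dim E\geq 3$ the transversality can be arranged with constants depending only on $N$, but that still needs a quantitative argument you have only asserted.)

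The missing ingredient, which is exactly what the paper supplies, is a reparametrization of time near the \emph{interior} near-collision instants. The paper routes the path through a fixed widely spread intermediate configuration $p$ chosen so that $\norm{v_{ij}}_E\geq 4R\geq 2\norm{u_{ij}}_E$, which confines each collision parameter $t_{ij}=-\langle u_{ij},v_{ij}\rangle/\norm{v_{ij}}^2$ to $(-1/2,1/2)$, and then (Lemma \ref{reparametrisation}) reparametrizes so that the path approaches and leaves each of the at most $N^2/2$ near-collisions at the rate $\abs{t-s_{ij}}^{1/(1+\kappa)}$; this makes $\int U_\kappa$ converge while $\int\dot\psi^2$ stays bounded, and it works in every dimension. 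Your endpoint profile $(s/T)^\theta$ implements this idea only at $s=0$ and $s=T$; to complete your proof you would have to apply the same $\abs{t-s_0}^{1/(1+\kappa)}$ slowdown at the interior crossing times of the perturbed segment as well, at which point the transversality hypothesis becomes unnecessary and the argument essentially reduces to the paper's.
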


The next result shall be useful for the study of free time
minimizers, that is to say, absolutely continuous curves which
minimizes the action in the set of curves
$\calC(x,y)=\bigcup_{T>0}\calC(x,y,T)\,.$ The Ma\~{n}\'{e}'s critical action
potential (see for instance \cite{CIPP}), or the \textsl{action
potential}, is defined in our setting on $E^N\times E^N$ by
$$\phi(x,y)=\inf\set{\phi(x,y,T)\tq T>0}=\inf\set{A(\gamma)
\tq \gamma\in\calC(x,y)}\,.$$ It is clear that
$\phi(x,y)=\phi(y,x)$, and that $\phi(x,y)\leq\phi(x,z)+\phi(z,y)$,
for any configurations $x,\,y,\,z$ in $E^N$. In fact, proposition
\ref{phi.dist} shows that the action potential $\phi$ is a distance
function. Notice that as a corollary of theorem \ref{upper.bound},
we have that $\phi(x,y)\leq (\alpha+\beta)R^{1-\kappa}$ whenever $x$
and $y$ are configurations contained in a ball of radius $R>0$ of
$E$. With similar arguments as in theorem \ref{upper.bound},
combined with a cluster decomposition, we obtain the following
theorem.

\begin{theorem}\label{potencial.holder} There is a positive
constant $\eta>0$ such that for all $x,y\in E^N$,
$$\phi(x,y)\leq \eta\,\norm{x-y}^{1-\kappa}\,.$$
\end{theorem}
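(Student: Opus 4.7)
I propose to decompose the bodies into clusters adapted to $\delta:=\norm{x-y}$, run Theorem~\ref{upper.bound} in parallel on each cluster, and control the inter-cluster potential separately. Simply applying the corollary to Theorem~\ref{upper.bound} (which gives $\phi(x,y)\leq(\alpha+\beta)R^{1-\kappa}$ when $x,y$ lie in a common ball of radius $R$) fails because $R$ may be much larger than $\delta$. The key geometric observation is the elementary bound
$$\bigl|\,\norm{r_i-r_j}_E-\norm{s_i-s_j}_E\,\bigr|\leq \norm{(r_i-s_i)-(r_j-s_j)}_E\leq 2\delta,$$
so pairwise distances in $x$ and in $y$ differ by $O(\delta)$, and the cluster structure at scales much larger than $\delta$ is essentially the same in both configurations.

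Fix a large constant $C$ (to be chosen in terms of $N$), let $\delta_k=C^k\delta$ for $k=0,\ldots,N$, and define $\sim_k$ to be the transitive closure on $\{1,\ldots,N\}$ of the relation $\norm{r_i-r_j}_E\leq\delta_k$. These partitions coarsen with $k$ and can strictly coarsen at most $N-1$ times, so some $k_0\in\{0,\ldots,N-1\}$ satisfies $\sim_{k_0}\,=\,\sim_{k_0+1}$. Writing $\rho=\delta_{k_0}\leq C^{N-1}\delta$: every cluster of $\sim_{k_0}$ has $x$-diameter at most $(N-1)\rho$, while any two distinct clusters are $x$-separated by more than $C\rho$. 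The displayed inequality then gives intra-cluster $y$-diameter $\leq(N+1)\rho$ and inter-cluster $y$-separation $>(C-2)\rho$.

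For each cluster $P$ take a near-minimizing path from Theorem~\ref{upper.bound} connecting the $x$- and $y$-restrictions of $P$ in a common time $T>0$, both restrictions lying in an $E$-ball of radius $O(N\rho)$. Running the cluster motions simultaneously on $[0,T]$, and provided each trajectory stays inside an $E$-ball of radius $O(N\rho)$ around its cluster's centroid (as the construction behind Theorem~\ref{upper.bound} should give, or can be enforced up to a constant), the choice $C$ of order $N$ keeps distinct clusters pairwise separated by $\gtrsim C\rho$ throughout. Summing Theorem~\ref{upper.bound} over the at-most-$N$ clusters and bounding the inter-cluster potential by $O(\rho^{-2\kappa})$, the total action satisfies
$$A\leq \alpha'\,T^{-1}\rho^{2}+\beta'\,T\rho^{-2\kappa},$$
for constants $\alpha',\beta'$ depending only on $\alpha,\beta,N,\kappa$ and the masses. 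Optimizing by $T\sim\rho^{1+\kappa}$ yields $A\leq \eta'\rho^{1-\kappa}\leq \eta'C^{(N-1)(1-\kappa)}\,\delta^{1-\kappa}$, which is the claimed estimate.

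The main obstacle is the cluster step: identifying a single scale $\rho$ on which the partition is ``stable'' in both $x$ and $y$ at once. The pigeonhole on $\sim_k$ carried out on $x$ alone, combined with the triangle-type bound above, resolves this. A secondary subtlety is to keep each cluster trajectory near its home region so that inter-cluster distances remain large throughout the motion; this is what forces $C$ to be fixed as a function of $N$ before the partition is formed.
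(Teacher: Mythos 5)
Your proposal is correct and is essentially the paper's own argument: the pigeonhole over geometric scales $C^k\delta$ to find a stable cluster scale is Lemma \ref{clusters} (where the paper discards nearby representatives instead of taking transitive closures), and running Proposition \ref{curva.min} in parallel on each cluster while bounding the inter-cluster potential by the separation is exactly Proposition \ref{upper.bound.epsilon}, followed by the same optimization $T\sim\rho^{1+\kappa}$. The only quibble is that since each cluster ball already has radius of order $N\rho$ and the trajectories of Proposition \ref{curva.min} wander within $6N$ times that radius, the separation constant $C$ must be taken of order $N^2$ rather than $N$; this only affects the final constant $\eta$.
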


Therefore, the action potential is H\"{o}lder continuous respect to the
Euclidean norm on $E^N\times E^N$. In other words, for any
configurations $x,\,y,\,z$ in $E^N$ we have $\phi(x,z)-\phi(y,z)\leq
\phi(x,y)\leq\eta\,\norm{x-y}^{1-\kappa}$. On the other hand, it is
easy to prove that the action potential is locally Lipschitz in the
open and dense subset $\Omega\times\Omega\subset E^N\times E^N$.

The action potential $\phi$ was introduced by Ma\~{n}\'{e} in the nineties, as
well as the Fathi's weak KAM theorem, for the study of the dynamics of
Tonelli Lagrangians on compact manifolds. But in fact, for the Newtonian
case of our N-body problems, the corresponding action potential is nothing but
the minimal action between two configurations for the Maupertuis action functional.
This last can be defined as the energy functional associated to the Jacobi metric in
the zero energy level.

\subsection{On the weak KAM theory.}

In order to give applications, we will show that theorem
\ref{potencial.holder} enables us to prove a weak KAM theorem in the
spirit of \cite{Fathi-CRAS1}, \cite{Fathi-Maderna}. The novelty in
this viewpoint, is that we regard the action of the Lax-Oleinik
semigroup on a space of H\"{o}lder functions.

Let us remember that a function $u:E^N\to \R$ is said
\textsl{dominated} by $L$, if it satisfies the condition
$u(x)-u(y)\leq \phi(x,y)$ for all $x,y\in E^N$. Since the action
potential is symmetric, theorem \ref{potencial.holder} implies that
dominated functions are H\"{o}lder continuous. On the other hand, it is
not difficult to prove that they are locally Lipschitz in the open
subset of total measure $\Omega\subset E^N$, see proposition
\ref{phi.loc.lip} below. Therefore, dominated functions are
differentiable almost everywhere. We shall discuss this in more
detail below. Another way to define the set of dominated functions,
is using the Lax-Oleinik semigroup: given a function $u:E^N\to
[-\infty,+\infty)$ and $t>0$ we define
$T^-_tu:E^N\to[-\infty,+\infty)$ by
$$T^-_tu(x)=\inf\set{u(y)+\phi(x,y,t)\tq y\in E^N }\,.$$ Then,
a continuous function $u$ is dominated if and only if $u\leq T^-_tu$
for all $t>0$. Notice that the set of dominated functions is convex
and stable under the Lax-Oleinik semigroup. Setting $T^-_0u=u$ for
any function $u$, we will prove that $(T^-_t)_{t\geq 0}$ is a
continuous semigroup on the set of dominated functions equipped with
the topology of uniform convergence on compact subsets.

Another set which is stable by the Lax-Oleinik semigroup is the set
of functions which are invariant by symmetries. If we observe that
the group of isometries of $E$, acts naturally on $E^N$ by
symmetries of the potential function, then an obvious question is
the existence of invariant fixed points of the semigroup. More
precisely, we will say that a function $u:E^N\to\R$ is
\textsl{invariant} if $u(r_1, \dots , r_N)=u(Ar_1+r, \dots, Ar_N+r)$
for all $x=(r_1,\dots,r_N)\in E^N$, $r\in E$ and $A\in O(E)$.

\begin{theorem}[invariant weak KAM]\label{wkam.thm}
There exists an invariant and dominated function $u:E^N\to\R$ such
that $u=T^-_tu$ for all $t\geq 0$.
\end{theorem}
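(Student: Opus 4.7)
\textit{Proof plan.}
Fix a reference configuration $x_0\in E^N$ and let $\calH\subset C(E^N,\R)$ denote the set of continuous functions $u$ that are (i) invariant under the action of the isometry group of $E$ on $E^N$, (ii) dominated, i.e.\ $u(x)-u(y)\leq \phi(x,y)$ for all $x,y$, and (iii) normalized by $u(x_0)=0$. Theorem~\ref{potencial.holder} forces the uniform bounds $|u(x)|\leq \eta\,\norm{x-x_0}^{1-\kappa}$ and $|u(x)-u(y)|\leq \eta\,\norm{x-y}^{1-\kappa}$ for every $u\in\calH$. Hence $\calH$ is convex, equicontinuous, and pointwise bounded, so by Arzel\`a-Ascoli it is a compact convex subset of $C(E^N,\R)$ equipped with the topology of uniform convergence on compact sets. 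Note $0\in\calH$, so $\calH$ is nonempty.

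The plan is to apply the Schauder-Tychonoff theorem to the renormalized Lax-Oleinik operator $\hat T_t u := T^-_t u - T^-_t u(x_0)$. Invariance of $\phi(\cdot,\cdot,t)$ under the isometry group implies $T^-_t u$ is invariant whenever $u$ is, and $T^-_t u$ is dominated through the chain $T^-_t u(a)\leq T^-_{t+s}u(a)\leq T^-_t u(b)+\phi(a,b,s)$ for every $s>0$ (the first inequality from the monotonicity of $T^-_s$ applied to $u\leq T^-_s u$, the second from concatenating curves) followed by taking the infimum over $s>0$. Therefore $\hat T_t$ maps $\calH$ into itself. Continuity of $\hat T_t$ in the compact-open topology rests on the coercive lower bound $\phi(x,y,t)\geq \abs{x-y}^2/(2t)$, obtained from Cauchy-Schwarz applied to the kinetic part, combined with the sublinear H\"older growth on every $u\in\calH$: together they restrict the infimum in $T^-_t u(x)=\inf_y\{u(y)+\phi(x,y,t)\}$ to an effectively compact set of $y$, uniformly for $x$ in a compact set and $u\in\calH$. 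The Schauder-Tychonoff theorem then produces $u\in\calH$ with $T^-_1 u=u+c$, where $c=T^-_1 u(x_0)\geq 0$ (the nonnegativity of $c$ being just the domination $u\leq T^-_1 u$).

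It remains to check $c=0$ and promote $u$ to a fixed point of the full semigroup. The constant loop $\gamma\equiv x$ gives $\phi(x,x,1)\leq \int_0^1 L(x,0)\,ds=U_\kappa(x)$, and then $c\leq T^-_1u(x)-u(x)\leq \phi(x,x,1)\leq U_\kappa(x)$ for every $x\in\Omega$. Choosing a sequence of configurations whose pairwise distances all tend to infinity makes $U_\kappa(x)\to 0$, so $c\leq 0$, and hence $T^-_1 u=u$. For any $t\in(0,1)$, factor the semigroup as $T^-_1=T^-_{1-t}\circ T^-_t$: monotonicity of $T^-_{1-t}$ together with $T^-_t u\geq u$ gives $u=T^-_1 u\geq T^-_{1-t}u$, while domination gives $T^-_{1-t}u\geq u$, whence $T^-_{1-t}u=u$; iteration then handles every $t\geq 1$. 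The main obstacle I anticipate is precisely handling the non-compactness of the ambient space: compactness of $\calH$ relies on both the normalization $u(x_0)=0$ and the uniform H\"older bound from Theorem~\ref{potencial.holder}, the continuity of $\hat T_t$ relies on the coercivity of $\phi$ in the second variable, and the vital step $c=0$ exploits precisely the escape of configurations to infinity and the vanishing of $U_\kappa$ there, a phenomenon absent from the classical compact weak KAM framework.
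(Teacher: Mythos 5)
Your proposal is correct and its architecture coincides with the paper's: a Schauder--Tychonoff fixed point argument on the convex set of normalized dominated functions, made compact by Ascoli via the uniform H\"older modulus of Theorem \ref{potencial.holder}, with continuity of the Lax--Oleinik operator secured by the coercive lower bound $\phi(x,y,t)\geq \mathrm{const}\cdot\norm{x-y}^2/t$ (the paper's lemma \ref{phit.mayor.kd2t}); the paper phrases the normalization as a quotient by constants and treats the invariant subset as a second pass, but that is cosmetic. Two sub-steps genuinely differ. First, to kill the constant $c$ you use $c=T^-_1u(x)-u(x)\leq\phi(x,x,1)\leq U_\kappa(x)$ and let the configuration spread out so that $U_\kappa(x)\to 0$; the paper instead feeds $u(x)-u(y)\leq\phi(x,y,t)-ct$ into Theorem \ref{upper.bound} and lets $R$ and $t$ go to infinity. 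Both exploit the vanishing of the potential at infinity, but yours is more elementary and needs only the trivial constant-curve bound rather than Theorem \ref{upper.bound}. Second, you apply the fixed point theorem only at $t=1$ and then upgrade to all $t$ by writing $u=T^-_1u=T^-_{1-t}(T^-_tu)\geq T^-_{1-t}u\geq u$; the paper asserts a common fixed point of all the $\widehat{T}^-_t$ in one stroke (deducing $c(t)=c(1)t$ from continuity), which strictly speaking requires an extra word about why a fixed point of $\widehat{T}^-_1$ can be chosen fixed by the whole semigroup -- your monotonicity argument is a clean and self-contained way to close exactly that gap.
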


In section \ref{wkam}, we prove the weak KAM theorem, and we study the relationship with the Hamilton-Jacobi equation. More precisely, we show that weak KAM solutions are global viscosity solutions in $\Omega$.

An important difference with the compact case is that here the Aubry set is empty. In particular the technique used in \cite{Maderna} to prove the invariance of all solutions is not available.
Moreover, we will exhibit non invariant solutions for the Kepler problem in the plane, which is the subject of the last section.

\section{H\"{o}lder regularity of the action potential}\label{action}

This section is devoted to the study of the action potential, and to
give the proofs for theorems \ref{upper.bound} and
\ref{potencial.holder}.

\subsection{Proof of theorem \ref{upper.bound}.} Given $r\in E$ and $R>0$, we say that a
configuration $x=(r_1,\dots,r_N)\in E^N$ is contained in the ball
$B(r,R)$ when we have $\norm{r_i-r}_E\leq R$ for all $i=1,\dots,N$.
Suppose now that we have two configurations $x$ and $y$ such that
for some $r\in E$ and some $R>0$, both $x$ and $y$ are contained in
$B(r,R)$. If we tried to bound $\phi(x,y,T)$ with the action of a
linear path, then two problems arise. The first one is that the
linear path can present collisions in which case the action is
infinite. The second one is that, even if the linear path avoid
collisions, the distance between two given bodies can be arbitrary
small for both configurations, hence the action can be arbitrary
large. Both problems are solved in the following way: fix an
intermediate configuration $p$ with sufficiently large mutual
distances, and take the linear path from $x$ to $p$ defined on
$[0,T/2]$ followed by the linear path from $p$ to $y$ defined on
$[T/2,T]$. This path has no more than $2N(N-1)$ collisions, and we
can determine the values of $t\in[0,T]$ in which these collisions
happen. Thus, reparametrizing the path in such a way that in the new
times of collisions the action integral converges, we obtain the
following proposition, from which we can easily deduce theorem
\ref{upper.bound}.

\begin{proposition}\label{curva.min}
Given two configurations $x,\, y\in E^N$ contained in a ball
$B(r,R)$, $r\in E$, $R>0$, and given $T>0$, there is a curve
$\gamma\in\calC(x,y,T)$, such that $\gamma(t)$ is contained in
$B(r,6NR)$ for all $t\in[0,T]$,
$$\frac{1}{2}\int_0^T\abs{\dot\gamma(t)}^2\,dt\leq
\alpha\,T^{-1}R^2\,,\textit{ and }
\int_0^TU_\kappa(\gamma(t))\,dt\leq\beta\,TR^{-2\kappa}\,,$$ where
$\alpha$ and $\beta$ are positive constants
that only depend on the number of bodies, the total mass and the
degree of homogeneity of the potential function.
In fact we can take
$$\alpha=640\,\frac{1+\kappa}{1-\kappa}\,M\,N^4\; \textrm{ and
}\;\beta=2\,\frac{1+\kappa}{1-\kappa}\,N^{(4\kappa+2)}M^2\,.$$
\end{proposition}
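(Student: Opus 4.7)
The plan is to implement the strategy sketched just before the statement: build a piecewise-linear test curve $\tilde\gamma:[0,T]\to E^{N}$ from $x$ to $y$ through a well-chosen intermediate configuration $p$, enumerate its finitely many collision times, and then precompose it with a carefully chosen reparametrization $\sigma:[0,T]\to[0,T]$ so that both $\frac{1}{2}\int|\dot\gamma|^{2}\,dt$ and $\int U_{\kappa}(\gamma(t))\,dt$ become finite, with the announced quantitative bounds.

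\textbf{The base curve.} I would pick a unit vector $\hat{e}\in E$ and set $p_{i}=r+3iR\,\hat{e}$, so that $p\in B(r,3NR)$ and $\|p_{i}-p_{j}\|_{E}=3|i-j|R\geq 3R$. Defining $\tilde\gamma$ to be affine from $x$ to $p$ on $[0,T/2]$ and from $p$ to $y$ on $[T/2,T]$, convexity gives $\tilde\gamma(u)\in B(r,3NR)\subset B(r,6NR)$. On each linear segment the vector $\tilde\gamma_{i}-\tilde\gamma_{j}$ is affine in $u$, so $r_{ij}$ can vanish at most once per segment, leaving at most $N(N-1)$ collision times $0<\tau_{1}<\cdots<\tau_{k}<T$; near any such $\tau_{j}$ the colliding pair satisfies $r_{ij}(u)=\|A_{ij}\|_{E}\,|u-\tau_{j}|$, with $\|A_{ij}\|_{E}\geq(2/T)(\|p_{i}-p_{j}\|_{E}-\|r_{i}-r_{j}\|_{E})\geq 2R/T$.

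\textbf{The reparametrization.} I then look for $\gamma(t)=\tilde\gamma(\sigma(t))$, where $\sigma$ admits at each preimage $t_{j}:=\sigma^{-1}(\tau_{j})$ a local expansion $\sigma(t)-\tau_{j}\sim c_{j}\,|t-t_{j}|^{\mu}$ with exponent $\mu=1/(1+\kappa)$. This is the unique balancing choice: the integrability conditions $1/2<\mu<1/(2\kappa)$ are precisely what make the kinetic-type integral $\int(\sigma')^{2}\,dt$ and the potential-type integral $\int|\sigma(t)-\tau_{j}|^{-2\kappa}\,dt$ both converge at $t_{j}$, and the common factor $(1-\kappa)/(1+\kappa)=2\mu-1=1-2\kappa\mu$ appearing in the resulting primitives is what produces the prefactor $(1+\kappa)/(1-\kappa)$ of the announced constants. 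Concretely, with $\tau_{0}=0$, $\tau_{k+1}=T$ and $\ell_{j}=\tau_{j}-\tau_{j-1}$, I allocate to the $\sigma$-interval $[\tau_{j-1},\tau_{j}]$ a $t$-interval $[t_{j-1},t_{j}]$ of length $h_{j}=C\ell_{j}^{1+\kappa}$ (with $C$ fixed by $\sum_{j}h_{j}=T$) and define $\sigma$ on $[t_{j-1},t_{j}]$ as the ``U-shaped'' piecewise power-law homeomorphism realising exponent $\mu$ at each endpoint; this choice of $h_{j}$ equalises the kinetic and potential contributions of each subinterval at the common order $\ell_{j}^{1-\kappa}$.

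\textbf{The estimates and main obstacle.} With $|\dot{\tilde\gamma}|^{2}\leq V^{2}\leq 64MN^{2}R^{2}/T^{2}$ (from $\|p_{i}-r_{i}\|_{E}\leq 4NR$), a direct computation gives $\int_{0}^{T}(\sigma')^{2}\,dt\leq\frac{\mu^{2}}{2\mu-1}\cdot\frac{(\sum_{j}\ell_{j}^{1+\kappa})(\sum_{j}\ell_{j}^{1-\kappa})}{T}$; bounding the two sums by $T^{1+\kappa}$ and $(k+1)^{\kappa}T^{1-\kappa}$ respectively and using $\mu^{2}/(2\mu-1)=1/(1-\kappa^{2})$ produces the stated $\alpha T^{-1}R^{2}$ after absorbing $(k+1)^{\kappa}\leq N^{2\kappa}$ into $N^{4}$. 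For the potential, the identity $r_{ij}(\sigma(t))=\|A_{ij}\|_{E}\,|\sigma(t)-\tau_{j}|$ together with $\|A_{ij}\|_{E}^{-2\kappa}\leq(T/2R)^{2\kappa}$ and the power-law scaling yields a per-collision contribution of order $m_{i}m_{j}TR^{-2\kappa}(1+\kappa)/(1-\kappa)$, summing to the announced $\beta TR^{-2\kappa}$. The point I expect to be most delicate is the potential contribution of pairs $(i,j)$ that do not collide along $\tilde\gamma$ but whose relative trajectory gets very close to the origin (``near-collisions''), since for $\kappa\geq 1/2$ the naive bound $Tm_{i}m_{j}r_{\min}^{-2\kappa}$ is not of the right order. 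I would handle this either by choosing $\hat{e}$ generically, so that for every pair either an honest collision occurs or $r_{ij}$ stays above a fixed multiple of $R$, or by enlarging the list of $\tau_{j}$ to include the minimum-distance times of the non-colliding pairs and applying the same power-law reparametrization around them, at the cost of only constant factors in $\alpha$ and $\beta$.
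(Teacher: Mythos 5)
Your proposal follows essentially the same route as the paper: a linear path through a spread-out intermediate configuration $p$, followed by a reparametrization with the power law $|t|^{1/(1+\kappa)}$ at the critical times, yielding the factor $(1+\kappa)/(1-\kappa)$ exactly as you predict. Concerning the one point you flag as delicate, the paper resolves it by your second option, not your first: it works from the outset with the closest-approach parameter $t_{ij}=-\langle u_{ij},v_{ij}\rangle/\norm{v_{ij}}^2$ of \emph{every} pair, for which the bound $d_{ij}(t)\geq\norm{v_{ij}}\,\abs{\psi(t)-t_{ij}}$ holds whether or not the pair actually collides, so collisions and near-collisions are treated uniformly (a generic choice of $\hat e$ could not give a bound uniform in $x,y$, since for any fixed direction there are configurations whose minimal pair separation along the segment is positive but arbitrarily small compared to $R$).
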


\begin{proof} We first observe that it suffices to give the proof
for a fixed value of $T>0\,$: for $S>0$, we can define
$\sigma:[0,S]\to E^N$ as $\sigma(s)=\gamma(sT/S)$, and we have
$$\int_0^S\abs{\dot\sigma(s)}^2\,ds=
T^2S^{-2}\int_0^S\abs{\dot\gamma(sT/S)}^2\,ds=
S^{-1}T\int_0^T\abs{\dot\gamma(t)}^2\,dt\leq 2\alpha\,S^{-1}R^2\,,$$
$$\int_0^SU_\kappa(\sigma(s))\,ds=\int_0^SU_\kappa(\gamma(sT/S))\,ds=
ST^{-1}\int_0^TU_\kappa(\gamma(t))\,dt\leq\beta\,SR^{-2\kappa}\,.$$
We will then give the proof for $T=2$. Take $v\in E$ such that
$\norm{v}_E=6R$, and define $p=(p_1,\dots,p_N)\in E^N$ by
$$p_i=r+(i-1)\,v\;,\;\;\;i=1,\dots,N\,.$$
Therefore, the configuration $p$ is clearly contained in
$B(r,6(N-1)R)$. Notice also that the mutual distances
$p_{ij}=\norm{p_i-p_j}_E$ of $p$ are greater than $6R$ and smaller
than $6(N-1)R$. 

Let now $x=(r_1,\dots,r_N)$ be a configuration such that
$\norm{r_i-r}_E\leq R$ for all $i=1,\dots,N$. We consider the curve
$z_x:[0,1]\to E^N$, defined by $z_x(t)=x+\psi_x(t)(p-x)$, where
$\psi_x:[0,1]\to[0,1]$ is an increasing function, with $\psi_x(0)=0$
and $\psi_x(1)=1$, to be determined. Our aim is to
choose the function  $\psi_x$ conveniently, in order to obtain a
bound of $A(z_x)$ which does not depend on $x$.

Recall that if  $u$ and $v$ are two vectors in a Euclidean space,
and $v\neq 0$, then we have, for all real number $\lambda$,
$$\norm{u+\lambda
v}^2=\left(\lambda\,\norm{v}+\frac{<u,v>}{\norm{v}}\right)^2+
\norm{u}^2-\frac{<u,v>^2}{\norm{v}^2}\,.$$ As a consequence, we get
$$\norm{u+\lambda v}\geq
\norm{v}\,\left|\,\lambda+\frac{<u,v>}{\norm{v}^2}\,\right|\,,$$ and
the minimum of $\norm{u+\lambda v}$ is reached for
$$\lambda=-\frac{<u,v>}{\norm{v}^2}\,.$$

We will use the notation $u_{ij}=r_i-r_j$ and
$v_{ij}=(p_i-p_j)-(r_i-r_j)$ for $i<j$. Thus, the mutual distances
of the configuration $z_x(t)$ can be written
$d_{ij}(t)=\norm{u_{ij}+\psi_x(t)v_{ij}}$.
Therefore, taking $\lambda=\psi_x(t)$, $u=u_{ij}$ and $v=v_{ij}$ in
the above considerations, we deduce that each mutual distance
$d_{ij}(t)$ verifies
$$d_{ij}(t)\geq \norm{v_{ij}}_E\,\abs{\psi_x(t)-t_{ij}}\geq
4R\,\abs{\psi_x(t)-t_{ij}}\,,$$ where
$$t_{ij}=-\frac{<u_{ij},v_{ij}>_E}{\norm{v_{ij}}_E^2}\,.$$
Since $\norm{u_{ij}}_E\leq 2R$ and $\norm{v_{ij}}_E\geq 4R$ for all $i<j$, we obtain
that $\abs{t_{ij}}<1/2$ for all $i<j$.
Therefore, since then number of pairs $(i,j)$
with $1\leq i<j\leq N$ is bounded by $N^2/2$,
assuming lemma \ref{reparametrisation} below, we can choose the function
$\psi_x$ can be chosen in such a way that, on one side,
$$\int_0^1\dot\psi_x(t)^2dt\leq 5N^2\frac{1+\kappa}{1-\kappa}\,,$$
and on the other side, for each $i<j$ there is a real number
$s_{ij}$ for which
$$\abs{\psi_x(t)-t_{ij}}\geq N^{-2}\abs{t-s_{ij}}^{(1/1+\kappa)}$$ for all
$t\in[0,1]$. Let us estimate the action $A(z_x)$ for this function
$\psi_x$. We have $\dot z_x(t)=\dot\psi_x(t)\,(p-x)$, and
$\norm{p_i-r_i}_E\leq 8NR$, for all $i=1,\dots,N$. Hence, by the previous estimates
we deduce that

\begin{eqnarray*}
\frac{1}{2}\int_0^1\abs{\dot
z_x(t)}^2\,dt&=&\frac{1}{2}\sum_{i=1}^Nm_i\norm{p_i-r_i}_E^2
\int_0^1\dot\psi_x(t)^2dt\\&&\\ &\leq&
160\,\frac{1+\kappa}{1-\kappa}\,M\,N^4\,R^2\,,
\end{eqnarray*}
and that

\begin{eqnarray*}
\int_0^1 U_\kappa(z_x(t))\,dt&=&
\sum_{i<j}\int_0^1m_im_j\,d_{ij}(t)^{-2\kappa}\,dt\\&&\\&\leq&\;
\sum_{i<j}\;\int_0^1m_im_j\,(4R)^{-2\kappa}\,N^{4\kappa}\,
\abs{t-s_{ij}}^{-(2\kappa/1+\kappa)}\,dt\,.
\end{eqnarray*}
Using that for $r\in (0,1)$, and for any $s\in\R$, we have
$$\int_0^1 \frac{1}{\abs{t-s}^r}\,dt=
\int_{-s}^{1-s} \frac{1}{\abs{u}^r}\,du
\leq 2\,\int_0^1 \frac{1}{u^r}\,du\,=2\,(1-r)^{-1}\;,$$
we deduce that
$$\int_0^1 U_\kappa(z_x(t))\,dt\;
\leq\;2\,\frac{1+\kappa}{1-\kappa}\,M^2\,N^{(4\kappa+2)} R^{-2\kappa}\,.$$

To finish the proof, let $y=(s_1,\dots,s_N)$ be a second
configuration contained in $B(r,R)$, and define
$\gamma\in\calC(x,y,2)$ as follows: $\gamma(t)=z_x(t)$ if $t\leq 1$,
and $\gamma(t)=z_y(2-t)$ if $t\geq 1$. We conclude that
$$A(\gamma)=A(z_x)+A(z_y)\leq\,320\,\frac{1+\kappa}{1-\kappa}\,M\,N^4\,R^2\;+
\;4\,\frac{1+\kappa}{1-\kappa}\,N^{(4\kappa+2)}M^2\,R^{-2\kappa}\,.$$ This also
proves the proposition for $T=2$, with
$$\alpha=640\,\frac{1+\kappa}{1-\kappa}\,M\,N^4\; \textrm{ and
}\;\beta=2\,\frac{1+\kappa}{1-\kappa}\,N^{(4\kappa+2)}M^2\,.$$
It is important to note that for the Newtonian case ($\kappa=1/2$), the dependence of both constants
in the number of bodies is in $N^4$.
\end{proof}\medbreak

We have used the following lemma.

\begin{lemma}\label{reparametrisation}
Given $\kappa\in (0,1)$ and real numbers $a_1<\dots<a_m$, there
are real numbers $b_1<\dots<b_m$ and an increasing absolutely continuous homeomorphism $F$ of $[0,1]$ such that
\begin{enumerate}
\item $$\abs{F(t)-a_i}\geq \frac{1}{2m}\abs{t-b_i}^{1/1+\kappa}$$ for all $t\in[0,1]$ and each $i=1,\dots,m$, and
\item $$\int_0^{\,1}F'(t)^2dt\leq (4+2a)(m+1)\frac{1+\kappa}{1-\kappa}\,,$$ where $a=\min\set{\abs{a_1},\dots,\abs{a_m}}$.
\end{enumerate}
\end{lemma}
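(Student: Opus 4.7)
Proof plan. The construction hinges on building $F$ with explicit cusp behavior at each $b_i$: locally, $F(t)-a_i=\lambda_i\,\mathrm{sgn}(t-b_i)\,|t-b_i|^{1/(1+\kappa)}$ for an amplitude $\lambda_i\geq 1/(2m)$. The exponent $1/(1+\kappa)$ is forced by (1), and it is exactly the smallest exponent making $F'$ square-integrable near $b_i$: one has $F'(t)\sim|t-b_i|^{-\kappa/(1+\kappa)}$, and the borderline computation $\int_0^L|s|^{-2\kappa/(1+\kappa)}\,ds=\tfrac{1+\kappa}{1-\kappa}\,L^{(1-\kappa)/(1+\kappa)}$ converges precisely because $2\kappa/(1+\kappa)<1\Leftrightarrow\kappa<1$, and produces the characteristic factor $(1+\kappa)/(1-\kappa)$ in (2).

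Construction. Assume $a_1<\dots<a_m$. Classify the indices: when $a_i\in[0,1]$, any homeomorphism $F$ of $[0,1]$ satisfies $F([0,1])=[0,1]$ so must hit $a_i$, forcing $b_i=F^{-1}(a_i)$; when $a_i\notin[0,1]$, place $b_i$ at the nearer endpoint of $[0,1]$, since then $|F(t)-a_i|\geq\mathrm{dist}(a_i,[0,1])$ is automatically bounded below. I would introduce the elementary cusp $\phi(s)=\mathrm{sgn}(s)|s|^{1/(1+\kappa)}$, choose the $b_i$'s inside $[0,1]$ in the required order, and on a short symmetric neighborhood of each $b_i$ set $F(t)=a_i+\lambda_i\,\phi(t-b_i)$; in the remaining transition zones, extend $F$ by affine segments so that it is continuous, monotone, and maps $0\mapsto 0$, $1\mapsto 1$. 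The amplitudes $\lambda_i$ are taken to be the smallest compatible with $\lambda_i\geq 1/(2m)$ and with joining the affine neighbors in a monotone way.

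Verification and obstacle. For (1), on the $i$-th cusp piece the inequality $|F(t)-a_i|=\lambda_i|t-b_i|^{1/(1+\kappa)}\geq\tfrac{1}{2m}|t-b_i|^{1/(1+\kappa)}$ is built in by the choice of $\lambda_i$; outside the $i$-th cusp piece, $|t-b_i|^{1/(1+\kappa)}\leq 1$, so it is enough to verify $|F(t)-a_i|\geq 1/(2m)$, which holds because the cusp piece at $b_i$ already exhausts a full neighborhood of $a_i$ in the range of $F$, or because $a_i$ lies far from $[0,1]$. For (2), each cusp piece contributes at most $\tfrac{2\lambda_i^2}{(1+\kappa)(1-\kappa)}(\ell_i/2)^{(1-\kappa)/(1+\kappa)}$ to $\int F'^2$, and each affine segment contributes (slope)${}^2\times$(length); summing the $m$ cusp contributions and the $m+1$ affine ones yields the stated bound $(4+2a)(m+1)(1+\kappa)/(1-\kappa)$, the factor $a$ entering through the extra excursion $F$ must make when some $a_i$ has large magnitude. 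The main obstacle is bookkeeping: choosing the $b_i$'s, the half-widths $\ell_i$, and the amplitudes $\lambda_i$ consistently so the cusp pieces do not overlap, so that $F$ remains monotone with the correct endpoint values, and so that both (1) and the precise numerical constants in (2) come out — the delicate cases being $a_i$'s clustered very close together and $a_i$'s lying just outside $[0,1]$, which squeeze the available cusp widths while the amplitude lower bound $1/(2m)$ must still be respected.
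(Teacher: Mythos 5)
Your overall shape is right --- cusps of exponent $1/(1+\kappa)$ at points $b_i$ with $F(b_i)=a_i$, amplitude at least $1/2m$ for item (1), and the borderline integral $\int_0^1 s^{-2\kappa/(1+\kappa)}ds=\frac{1+\kappa}{1-\kappa}$ for item (2) --- but the step you defer as ``bookkeeping'' is the actual content of the lemma, and two of the claims you do make in the verification are false. First, it is not enough to check $\abs{F(t)-a_i}\geq 1/2m$ away from the $i$-th cusp: if $a_{i+1}-a_i<1/2m$, then at $t=b_{i+1}$ the left side of (1) equals $a_{i+1}-a_i$, which can be arbitrarily small, while the right side is $\frac{1}{2m}(b_{i+1}-b_i)^{1/(1+\kappa)}$; the inequality therefore forces the quantitative constraint $b_{i+1}-b_i\leq(2m(a_{i+1}-a_i))^{1+\kappa}$, and even then (1) must be verified for the index $i$ on every foreign cusp and every affine segment, where the concave profile $(t-b_i)^{1/(1+\kappa)}$ can overtake an affine $F$. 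Second, for $a_i\notin[0,1]$ your recipe (put $b_i$ at the nearer endpoint and rely on $\abs{F(t)-a_i}\geq\mathrm{dist}(a_i,[0,1])$) fails whenever that distance is small: already for $m=1$, $a_1=1+\delta$ and $F=\mathrm{id}$, the inequality $1+\delta-t\geq\frac12(1-t)^{1/(1+\kappa)}$ is violated for $1-t$ small once $\delta<\sup_{s\in[0,1]}\left(\frac12 s^{1/(1+\kappa)}-s\right)$, a positive constant not depending on $\delta$; a cusp at the endpoint is still needed, but your cusp formula would force $F(b_i)=a_i\notin[0,1]$, contradicting that $F$ is a homeomorphism of $[0,1]$. (Also, the factor $a$ in (2) cannot come from an ``extra excursion'': the range of $F$ is $[0,1]$ no matter how large the $a_i$ are.)

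The paper sidesteps all of this by prescribing the \emph{speed} rather than the graph: $F'=f_{b,c}=\max_i g_c(\cdot-b_i)$ with $g_c(x)=c\,\abs{x}^{-\kappa/(1+\kappa)}$, defined on all of $\R$, so that $b_i$ may lie outside $[0,1]$ when $a_i$ does. The pointwise domination $F'\geq g_c(\cdot-b_i)$ integrates to $\abs{F(t)-F(b_i)}\geq c(1+\kappa)\abs{t-b_i}^{1/(1+\kappa)}$ for every $i$ and every $t$ simultaneously --- no gluing, no case analysis, no constraint propagation between neighbouring cusps. For fixed $c$ the interpolation conditions $F(b_i)=a_i$ determine $b$ uniquely (with $B_i=2^{-\kappa}[A_i/c(1+\kappa)]^{1+\kappa}$, the relation your construction would have to reproduce by hand), and the one remaining normalization $F(1)=1$ is obtained by the intermediate value theorem for $c$ in the interval $[1/2m(1+\kappa),\,2+a]$; the two endpoints of that interval are exactly what produce the constants $1/2m$ in (1) and $4+2a$ in (2). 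This is also where $a$ genuinely enters: when all $\abs{a_i}$ are large, all the $b_i$ lie far from $[0,1]$, so $F'$ is small there unless $c$ is taken comparably large.
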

\begin{proof}
Given $c>0$, let $g_c:\R\setminus\set{0}\to\R$ defined by
 $g_c(x)=c\,\abs{x}^{-\kappa/1+\kappa}$. Given $b=(b_1,\dots,b_m)\in\R^m$
 such that $b_1<\dots<b_m$, we also define the function
 $f_{b,c}:\R\setminus\set{b_1,\dots,b_m}\to\R$ by
 $$f_{b,c}(t)=\max\set{g_c(t-b_1),\dots,g_c(t-b_m)}\,.$$

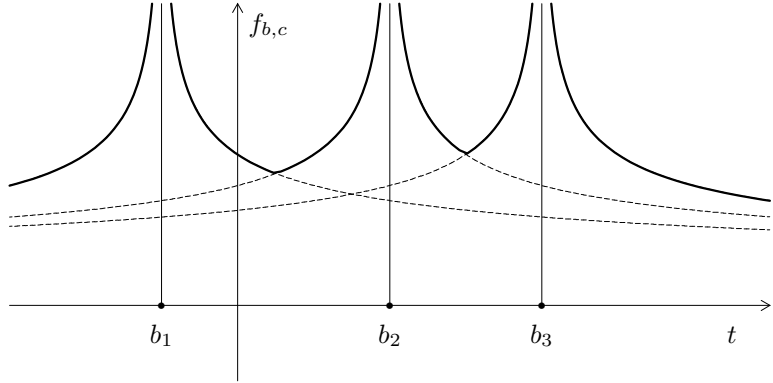
\begin{figure}[ht]
\center
\unitlength 1cm
\begin{picture}(11,6)(-0.5,-0.5)%
\color{black}\thinlines 
\path(0,1)(10,1)
\path(3,0)(3,5)
\path(9.8845,1.0667)(10,1)(9.8845,0.9333)
\path(2.9333,4.8845)(3,5)(3.0667,4.8845)
\path(2,1)(2,5)
\dashline{0.15}(0,2.5874)(0.102,2.6154)(0.2041,2.6454)(0.3061,2.6778)(0.4082,2.7129)(0.5102,2.7511)(0.6122,2.793)
(0.7143,2.8393)(0.8163,2.8907)(0.9184,2.9484)(1.0204,3.0138)(1.1224,3.089)(1.2245,3.1769)(1.2755,3.2268)
(1.3265,3.2817)(1.3776,3.3424)(1.4286,3.4101)(1.4796,3.4865)(1.5306,3.5735)(1.5816,3.6741)(1.6327,3.7926)
(1.6837,3.9353)(1.7092,4.0187)(1.7347,4.1125)(1.7602,4.2192)(1.7857,4.3422)(1.8112,4.4864)(1.8367,4.6593)
(1.8495,4.7599)(1.8622,4.8725)(1.875,5)
\dashline{0.15}(2.1252,5)(2.1301,4.947)(2.1429,4.8259)(2.1556,4.7183)(2.1684,4.622)(2.1939,4.4556)(2.2194,4.3161)
(2.2449,4.1967)(2.2704,4.0928)(2.2959,4.0013)(2.3214,3.9197)(2.3469,3.8463)(2.398,3.719)(2.449,3.6119)
(2.5,3.5198)(2.551,3.4395)(2.602,3.3686)(2.6531,3.3052)(2.7041,3.2481)(2.7551,3.1963)(2.8571,3.1055)(2.9592,3.028)
(3.0612,2.9608)(3.1633,2.9017)(3.2653,2.8491)(3.3673,2.8019)(3.4694,2.7592)(3.5714,2.7203)(3.6735,2.6846)
(3.7755,2.6517)(3.8776,2.6212)(3.9796,2.5928)(4.0816,2.5664)(4.1837,2.5416)(4.2857,2.5183)(4.3878,2.4964)
(4.4898,2.4756)(4.5918,2.456)(4.6939,2.4374)(4.7959,2.4197)(4.898,2.4028)(5,2.3867)(5.102,2.3713)(5.2041,2.3566)
(5.3061,2.3425)(5.4082,2.329)(5.5102,2.316)(5.6122,2.3035)(5.7143,2.2914)(5.8163,2.2798)(5.9184,2.2686)
(6.0204,2.2578)(6.1224,2.2473)(6.2245,2.2372)(6.3265,2.2274)(6.4286,2.2179)(6.5306,2.2087)(6.6327,2.1997)
(6.7347,2.1911)(6.8367,2.1826)(6.9388,2.1744)(7.0408,2.1664)(7.1429,2.1587)(7.2449,2.1511)(7.3469,2.1437)
(7.449,2.1366)(7.551,2.1296)(7.6531,2.1227)(7.7551,2.116)(7.8571,2.1095)(7.9592,2.1031)(8.0612,2.0969)
(8.1633,2.0908)(8.2653,2.0849)(8.3673,2.0791)(8.4694,2.0734)(8.5714,2.0678)(8.6735,2.0623)(8.7755,2.0569)
(8.8776,2.0517)(8.9796,2.0465)(9.0816,2.0415)(9.1837,2.0365)(9.2857,2.0317)(9.3878,2.0269)(9.4898,2.0222)
(9.5918,2.0176)(9.6939,2.0131)(9.7959,2.0087)(9.898,2.0043)(10,2)
\thicklines 
\path(0,2.5874)(0.102,2.6154)(0.2041,2.6454)(0.3061,2.6778)(0.4082,2.7129)(0.5102,2.7511)(0.6122,2.793)
(0.7143,2.8393)(0.8163,2.8907)(0.9184,2.9484)(1.0204,3.0138)(1.1224,3.089)(1.2245,3.1769)(1.2755,3.2268)
(1.3265,3.2817)(1.3776,3.3424)(1.4286,3.4101)(1.4796,3.4865)(1.5306,3.5735)(1.5816,3.6741)(1.6327,3.7926)
(1.6837,3.9353)(1.7092,4.0187)(1.7347,4.1125)(1.7602,4.2192)(1.7857,4.3422)(1.8112,4.4864)(1.8367,4.6593)
(1.8495,4.7599)(1.8622,4.8725)(1.875,5)
\path(2.1252,5)(2.1301,4.947)(2.1429,4.8259)(2.1556,4.7183)(2.1684,4.622)(2.1939,4.4556)(2.2194,4.3161)
(2.2449,4.1967)(2.2704,4.0928)(2.2959,4.0013)(2.3214,3.9197)(2.3469,3.8463)(2.398,3.719)(2.449,3.6119)
(2.5,3.5198)(2.551,3.4395)(2.602,3.3686)(2.6531,3.3052)(2.7041,3.2481)(2.7551,3.1963)(2.8571,3.1055)(2.9592,3.028)
(3.0612,2.9608)(3.1633,2.9017)(3.2653,2.8491)(3.3673,2.8019)(3.4694,2.7592)(3.5714,2.7758)(3.6735,2.8202)
(3.7755,2.8694)(3.8776,2.9245)(3.9796,2.9866)(4.0816,3.0576)(4.1837,3.14)(4.2857,3.2374)(4.3367,3.2933)
(4.3878,3.3553)(4.4388,3.4247)(4.4898,3.5029)(4.5408,3.5924)(4.5918,3.6962)(4.6429,3.8189)(4.6939,3.9676)
(4.7194,4.0549)(4.7449,4.1535)(4.7704,4.2662)(4.7959,4.397)(4.8214,4.5516)(4.8342,4.6404)(4.8469,4.7389)
(4.8597,4.8489)(4.8724,4.9732)(4.8749,5)
\path(5.1251,5)(5.1276,4.9732)(5.1403,4.8489)(5.1531,4.7389)(5.1658,4.6404)(5.1786,4.5516)(5.2041,4.397)
(5.2296,4.2662)(5.2551,4.1535)(5.2806,4.0549)(5.3061,3.9676)(5.3571,3.8189)(5.4082,3.6962)(5.4592,3.5924)
(5.5102,3.5029)(5.5612,3.4247)(5.6122,3.3553)(5.6633,3.2933)(5.7143,3.2374)(5.8163,3.14)(5.9184,3.0576)
(6.0204,3.0138)(6.1224,3.089)(6.2245,3.1769)(6.2755,3.2268)(6.3265,3.2817)(6.3776,3.3424)(6.4286,3.4101)
(6.4796,3.4865)(6.5306,3.5735)(6.5816,3.6741)(6.6327,3.7926)(6.6837,3.9353)(6.7092,4.0187)(6.7347,4.1125)
(6.7602,4.2192)(6.7857,4.3422)(6.8112,4.4864)(6.8367,4.6593)(6.8495,4.7599)(6.8622,4.8725)(6.875,5)
\path(7.1252,5)(7.1301,4.947)(7.1429,4.8259)(7.1556,4.7183)(7.1684,4.622)(7.1939,4.4556)(7.2194,4.3161)
(7.2449,4.1967)(7.2704,4.0928)(7.2959,4.0013)(7.3214,3.9197)(7.3469,3.8463)(7.398,3.719)(7.449,3.6119)
(7.5,3.5198)(7.551,3.4395)(7.602,3.3686)(7.6531,3.3052)(7.7041,3.2481)(7.7551,3.1963)(7.8571,3.1055)(7.9592,3.028)
(8.0612,2.9608)(8.1633,2.9017)(8.2653,2.8491)(8.3673,2.8019)(8.4694,2.7592)(8.5714,2.7203)(8.6735,2.6846)
(8.7755,2.6517)(8.8776,2.6212)(8.9796,2.5928)(9.0816,2.5664)(9.1837,2.5416)(9.2857,2.5183)(9.3878,2.4964)
(9.4898,2.4756)(9.5918,2.456)(9.6939,2.4374)(9.7959,2.4197)(9.898,2.4028)(10,2.3867)
\thinlines 
\dashline{0.15}(0,2.0455)(0.102,2.0506)(0.2041,2.0559)(0.3061,2.0612)(0.4082,2.0667)(0.5102,2.0722)(0.6122,2.0779)
(0.7143,2.0837)(0.8163,2.0896)(0.9184,2.0957)(1.0204,2.1019)(1.1224,2.1082)(1.2245,2.1147)(1.3265,2.1214)
(1.4286,2.1282)(1.5306,2.1351)(1.6327,2.1423)(1.7347,2.1496)(1.8367,2.1571)(1.9388,2.1649)(2.0408,2.1728)
(2.1429,2.181)(2.2449,2.1894)(2.3469,2.198)(2.449,2.2069)(2.551,2.216)(2.6531,2.2255)(2.7551,2.2352)(2.8571,2.2453)
(2.9592,2.2557)(3.0612,2.2664)(3.1633,2.2775)(3.2653,2.2891)(3.3673,2.301)(3.4694,2.3134)(3.5714,2.3264)
(3.6735,2.3398)(3.7755,2.3538)(3.8776,2.3684)(3.9796,2.3836)(4.0816,2.3995)(4.1837,2.4162)(4.2857,2.4338)
(4.3878,2.4522)(4.4898,2.4716)(4.5918,2.4921)(4.6939,2.5138)(4.7959,2.5368)(4.898,2.5613)(5,2.5874)(5.102,2.6154)
(5.2041,2.6454)(5.3061,2.6778)(5.4082,2.7129)(5.5102,2.7511)(5.6122,2.793)(5.7143,2.8393)(5.8163,2.8907)
(5.9184,2.9484)(6.0204,3.0138)(6.1224,3.089)(6.2245,3.1769)(6.2755,3.2268)(6.3265,3.2817)(6.3776,3.3424)
(6.4286,3.4101)(6.4796,3.4865)(6.5306,3.5735)(6.5816,3.6741)(6.6327,3.7926)(6.6837,3.9353)(6.7092,4.0187)
(6.7347,4.1125)(6.7602,4.2192)(6.7857,4.3422)(6.8112,4.4864)(6.8367,4.6593)(6.8495,4.7599)(6.8622,4.8725)
(6.875,5)
\dashline{0.15}(7.1252,5)(7.1301,4.947)(7.1429,4.8259)(7.1556,4.7183)(7.1684,4.622)(7.1939,4.4556)(7.2194,4.3161)
(7.2449,4.1967)(7.2704,4.0928)(7.2959,4.0013)(7.3214,3.9197)(7.3469,3.8463)(7.398,3.719)(7.449,3.6119)
(7.5,3.5198)(7.551,3.4395)(7.602,3.3686)(7.6531,3.3052)(7.7041,3.2481)(7.7551,3.1963)(7.8571,3.1055)(7.9592,3.028)
(8.0612,2.9608)(8.1633,2.9017)(8.2653,2.8491)(8.3673,2.8019)(8.4694,2.7592)(8.5714,2.7203)(8.6735,2.6846)
(8.7755,2.6517)(8.8776,2.6212)(8.9796,2.5928)(9.0816,2.5664)(9.1837,2.5416)(9.2857,2.5183)(9.3878,2.4964)
(9.4898,2.4756)(9.5918,2.456)(9.6939,2.4374)(9.7959,2.4197)(9.898,2.4028)(10,2.3867)
\dashline{0.15}(0,2.1696)(0.102,2.1777)(0.2041,2.186)(0.3061,2.1945)(0.4082,2.2033)(0.5102,2.2123)(0.6122,2.2217)
(0.7143,2.2313)(0.8163,2.2412)(0.9184,2.2515)(1.0204,2.2621)(1.1224,2.273)(1.2245,2.2844)(1.3265,2.2962)
(1.4286,2.3084)(1.5306,2.3211)(1.6327,2.3343)(1.7347,2.3481)(1.8367,2.3624)(1.9388,2.3774)(2.0408,2.3931)
(2.1429,2.4095)(2.2449,2.4266)(2.3469,2.4447)(2.449,2.4637)(2.551,2.4838)(2.6531,2.505)(2.7551,2.5274)
(2.8571,2.5513)(2.9592,2.5767)(3.0612,2.6039)(3.1633,2.6331)(3.2653,2.6645)(3.3673,2.6985)(3.4694,2.7354)
(3.5714,2.7758)(3.6735,2.8202)(3.7755,2.8694)(3.8776,2.9245)(3.9796,2.9866)(4.0816,3.0576)(4.1837,3.14)
(4.2857,3.2374)(4.3367,3.2933)(4.3878,3.3553)(4.4388,3.4247)(4.4898,3.5029)(4.5408,3.5924)(4.5918,3.6962)
(4.6429,3.8189)(4.6939,3.9676)(4.7194,4.0549)(4.7449,4.1535)(4.7704,4.2662)(4.7959,4.397)(4.8214,4.5516)
(4.8342,4.6404)(4.8469,4.7389)(4.8597,4.8489)(4.8724,4.9732)(4.8749,5)
\dashline{0.15}(5.1251,5)(5.1276,4.9732)(5.1403,4.8489)(5.1531,4.7389)(5.1658,4.6404)(5.1786,4.5516)(5.2041,4.397)
(5.2296,4.2662)(5.2551,4.1535)(5.2806,4.0549)(5.3061,3.9676)(5.3571,3.8189)(5.4082,3.6962)(5.4592,3.5924)
(5.5102,3.5029)(5.5612,3.4247)(5.6122,3.3553)(5.6633,3.2933)(5.7143,3.2374)(5.8163,3.14)(5.9184,3.0576)
(6.0204,2.9866)(6.1224,2.9245)(6.2245,2.8694)(6.3265,2.8202)(6.4286,2.7758)(6.5306,2.7354)(6.6327,2.6985)
(6.7347,2.6645)(6.8367,2.6331)(6.9388,2.6039)(7.0408,2.5767)(7.1429,2.5513)(7.2449,2.5274)(7.3469,2.505)
(7.449,2.4838)(7.551,2.4637)(7.6531,2.4447)(7.7551,2.4266)(7.8571,2.4095)(7.9592,2.3931)(8.0612,2.3774)
(8.1633,2.3624)(8.2653,2.3481)(8.3673,2.3343)(8.4694,2.3211)(8.5714,2.3084)(8.6735,2.2962)(8.7755,2.2844)
(8.8776,2.273)(8.9796,2.2621)(9.0816,2.2515)(9.1837,2.2412)(9.2857,2.2313)(9.3878,2.2217)(9.4898,2.2123)
(9.5918,2.2033)(9.6939,2.1945)(9.7959,2.186)(9.898,2.1777)(10,2.1696)
\path(5,1)(5,5)
\path(7,1)(7,5)
\put(2,1){\makebox(0,0){\tiny$\bullet$}}
\put(5,1){\makebox(0,0){\tiny$\bullet$}}
\put(7,1){\makebox(0,0){\tiny$\bullet$}}
\put(2,0.6){\makebox(0,0){\normalsize $b_1$}}
\put(7,0.6){\makebox(0,0){\normalsize $b_3$}}
\put(9.5,0.6){\makebox(0,0){\normalsize $t$}}
\put(5,0.6){\makebox(0,0){\normalsize $b_2$}}
\put(3.4,4.7){\makebox(0,0){\normalsize $f_{b,c}$}}
\end{picture}%
\caption{Graph of $f_{b,c}$ for $b=(b_1,b_2,b_3)$.}\label{speed}
\end{figure}

We will define the required function $F$ as a primitive of a
function $f_{b,c}$ for a good choice of $b$ and $c$. More precisely,
we define $F:\R\to\R$ by $$F(t)=\int_0^tf_{b,c}(s)\,ds\,.$$ The map $F$ is an increasing
homeomorphism of $\R$. As any primitive, $F$ is absolutely continuous. Moreover, we have
$$\abs{F(t)-F(b_i)}\geq c\,(1+\kappa)\,\abs{t-b_i}^{1/1+\kappa}$$
for all $t\in\R$ and each $i=1,\dots,m$. Therefore, we must choose
$c>0$ and $b=(b_1,\dots,b_m)\in\R^m$ such that $F(1)=1$ and
$F(b_i)=a_i$ for all $i=1,\dots,m$.

\begin{figure}[ht]
\center
\input{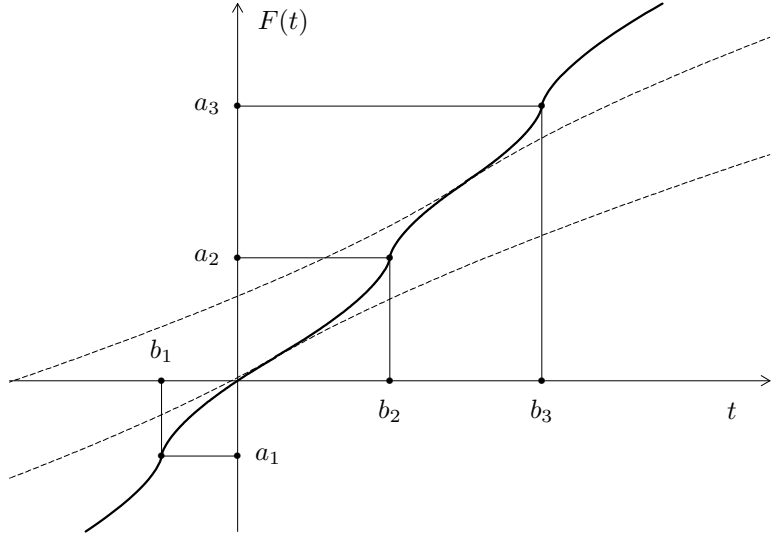}
\caption{Graph of $F(t)=\int_0^t f_{b,c}(s)\,ds$.}\label{repar}
\end{figure}

If we fix $c>0$, then we have a unique possible choice for $b$. To see this, first observe that the $m-1$ distances between the consecutive values of $a_i$ determine the $m-1$ distances between the consecutive values of $b_i$. If we set $A_i=a_{i+1}-a_i$ and $B_i=b_{i+1}-b_i$ then we must have $$A_i=\int_{b_i}^{b_{i+1}}f_{b,c}(s)\,ds=
2c\int_0^{B_i/2}s^{-\kappa/1+\kappa}ds=
2^{\kappa/1+\kappa}\,c\,(1+\kappa)\,B_i^{1/1+\kappa}$$ hence
$B_i=2^{-\kappa}[A_i/c\,(1+\kappa)]^{1+\kappa}$. From the condition $F(b_1)=a_1$ we deduce that
$$a_1=\int_0^{b_1}f_{b,c}(s)\,ds=-\int_0^{-b_1}f_{b,c}(s+b_1)\,ds\,.$$
Therefore $b_1$ must be the unique solution of the equation $\int_0^{-x}f_{b',c}(s)\,ds=-a_1$ where $b'=(0,B_1,B_1+B_2,\dots,B_1+\dots+B_{m-1})=
(0,b_2-b_1,\dots,b_m-b_1)$.
Moreover, we have showed that there is a continuous vector $b(c)\in\R^m$ such that $\int_0^{b_i}f_{b(c),c}(s)\,ds=a_i$ for all $i=1,\dots,m$. Therefore, it is clear that $$\delta(c)=\int_0^1f_{b(c),c}(s)\,ds$$ also depends continuously on $c$.
We claim that there is $c\in[1/2m(1+\kappa),2+a]$ for which $\delta(c)=1$.
We have
$$\delta(c)=\int_0^1f_{b(c),c}(s)\,ds \leq \sum_{i=1}^m\int_0^1g_c(s-b_i(c))\,ds\,.$$
Since $$\int_0^1g_c(s-b_i(c))\,ds\,
=\int_{-b_i(c)}^{1-b_i(c)}c\,\abs{u}^{-\kappa/1+\kappa}du
\leq 2c\,\int_0^1u^{-\kappa/1+\kappa}du=2c\,(1+\kappa)\,,$$
we deduce that $\delta(c)<1$ when $c<1/2m(1+\kappa)$. In order to prove the claim, it suffices to show that $\delta(c)>1$ when $c>2+a$.
Since $a=\abs{a_j}$ for some $j\in\set{1,\dots,m}$, we have
\begin{eqnarray*}
a&=&\abs{\int_0^{b_j(c)}f_{b(c),c}(s)\,ds}\\
&\geq&
\abs{\int_0^{b_j(c)}c\,\abs{s-b_j(c)}^{-\kappa/1+\kappa}\,ds}\\
&\geq&
c\,\int_0^{\abs{b_j(c)}}s^{-\kappa/1+\kappa}\,ds=c\,(1+\kappa)
\abs{b_j(c)}^{1/1+\kappa}\,,
\end{eqnarray*}
which implies $\abs{b_j(c)}\leq [a/c(1+\kappa)]^{(1+\kappa)}$.
On the other hand we have
\begin{eqnarray*}
\delta(c)&\geq &\min\set{g_c(s-b_j(c))\mid s\in[0,1]}\\
&\geq&c\,(1+\abs{b_j(c)})^{-\kappa/1+\kappa}.
\end{eqnarray*}
Thus, it suffices to prove that $\abs{b_j(c)}\leq c^{(1+\kappa)/\kappa}-1$ when $c>2+a$. By the previous estimation of $\abs{b_j(c)}$, we only have to prove that $(a/1+\kappa)^{1+\kappa}\leq c^{1+\kappa}(c^{1+\kappa/\kappa}-1)$, but this condition is clearly satisfied if $c>2$ and $c>a$.

We take $c\in [1/2m(1+\kappa),2+a]$ such that $\delta(c)=1$ and we define $F:[0,1]\to[0,1]$ by $$F(t)=\int_0^tf_{b(c),c}(s)\,ds\,.$$ In order to see that this function satisfy all the required conditions, it remains to estimate the $L^2$ norm of $F'$.
If we observe that $[0,1]\setminus\set{b_1(c),\dots,b_m(c)}$ has at most $m+1$ components $I_j$, and that on each one of these components we have $$\int_{I_j}f_{b(c),c}(s)^2\,ds\leq 2\int_0^1 c\,s^{-2\kappa/1+\kappa}ds\,,$$ we conclude that $$\int_0^1 F'(t)^2dt\leq (4+2a)(m+1)\frac{1+\kappa}{1-\kappa}\,.$$
\end{proof}\medbreak

\subsection{Minimal configurations.}

The following observations show that theorem \ref{upper.bound} is
optimal in the sense that the bound is reached by some
configurations. We shall first recall the notions of
\textsl{central} and \textsl{minimal} configurations, as well as
some properties (see for instance Wintner \cite{Wintner}, where the Newtonian case is discussed).

We say that a configuration $x\in E^N$ is \textsl{minimal}, if it
is a minimum of the potential function $U_\kappa$ restricted to the sphere
$\set{y\in E^N\tq I(y)=I(x)}$. In particular, minimal
configurations are \textsl{central} configurations, that is, critical points of $\widetilde{U}_\kappa=I^\kappa\,U_\kappa$, or in other words,
configurations $x\in E^N$ which are critical points of $U_\kappa$
restricted to $\set{y\in E^N\tq I(y)=I(x)}$. Central configurations
are also characterized as configurations which admit homothetic motions.
In other words, a configuration $x_0\in E^N$ is central, if and only
if $U_\kappa(x_0)<+\infty$ and $x(t)=r(t)x_0$ is a solution of the N-body problem for some
positive real function $r(t)$.

Take $x_0\in E^N$ a central configuration. If we look for an homothetic motion
through $x_0$, then we must solve a one dimensional differential
equation satisfied by $r(t)$.
A particular solution, that we shall call parabolic, is given by $x(t)=c\,t^{1/1+\kappa}x_0$ for
some value of $c>0$. A simple computation shows that the action of this solution is
\begin{eqnarray*}
A(x\mid_{[0,T]})&=&\frac{c^2}{2(1+\kappa)^2}
\int_0^T\,t^{-2\kappa/1+\kappa}\,dt\;+\;c^{-2\kappa}U_\kappa(x_0)
\int_0^T\,t^{-2\kappa/1+\kappa}\,dt\\&&\\
&=&\left(\frac{c^2}{2(1-\kappa^2)}+
c^{-2\kappa}U_\kappa(x_0)\frac{1+\kappa}{1-\kappa}\right)\,T^{(1-\kappa)/(1+\kappa)}\,.
\end{eqnarray*}
If we set $R_T=\norm{x(T)}=T^{1/1+\kappa}\,\norm{c\,x_0}$, then we
can write
$$A(x\mid_{[0,T]})=\alpha_0\,T^{-1}R_T^2\,+\,\beta_0\,T\,R_T^{-2\kappa}\,,$$
for a good choice of constants $\alpha_0$ and $\beta_0$.

On the other hand, we will prove that if $x_0$ is a minimal configuration, then the
above solution $x(t)$ is globally minimizing. In other words, we
have
$$\phi(0,x(T),T)=A(x\mid_{[0,T]})\,,$$ for all $T>0$, therefore the bound for $\phi(x,y,T)$
given by theorem \ref{upper.bound} cannot be improved modulo the
choice of the constants. We will assume for simplicity that $x_0$ is also normal, meaning that $I(x_0)=1$.

In order to prove that $x$ is globally minimizing,
we will first study the homogeneous one center problem in dimension one
which is satisfied by the function $r(t)=I(x(t))^{1/2}$.
More precisely, we have that $r(t)$ must be an extremal for the
Lagrangian system in $\R^+=[0,+\infty)$ defined by
$$L_0(r,v)=\frac{\;v^2}{2}+ \frac{U_0}{\;r^{2\kappa}}\;,$$
where $U_0=U\kappa(x_0)$.

It can be proved easily using the lower semi-coninuity of the Lagrangian
action of $L_0$ that, given $0\leq r_1<r_2$,
there is at least one absolute minimizer between $r_1$ and $r_2$.
This means that there is a curve $\gamma(r_1,r_2):[0,T]\to \R^+$,
for some positive time $T>0$, such that $\gamma(r_1,r_2)(0)=r_1$,
$\gamma(r_1,r_2)(T)=r_2$, and such that $\gamma(r_1,r_2)$
minimizes the Lagrangian action in the set of all absolutely continuous
curves $\sigma:[a,b]\to \R^+$ with $\sigma(a)=r_1$, $\sigma(b)=r_2$, and $a<b$.
Moreover, the fact that this absolute minimization property holds
(i.e. with fixed extremities but in free time),
implies that the energy of the extremal $\gamma(r_1,r_2)$ must be critical (zero).
Therefore $\gamma(r_1,r_2)$ satisfies the differential equation
$\dot\gamma^2=2U_0\,\gamma^{-2\kappa}$ and we conclude the uniqueness of such
absolute minimizer. By integration we get that for any $r>0$,
the absolute minimizer $\gamma(0,r):[0,T]\to\R^+$ is defined for
$T=(1+\kappa)(2U_0)^{-1/2}r^{(1+\kappa)}$
and that
$\gamma(0,r)(t)=c\,t^{1/1+\kappa}$ where
$c=(2U_0)^{1/(2+2\kappa)}
(1+\kappa)^{1/1+\kappa}$.

We prove now that if $x_0$ is a minimal configuration,
then the parabolic motion $x(t)=c\,t^{1/1+\kappa}x_0$ defined above
is globally minimizing.
Fix $T>0$, and take any other curve $\gamma\in\calC(0,x(T),T)$.
We must to prove that $A(\gamma)\geq A(x\mid_{[0,T]})$.
In fact we will prove that if $S=\sup\set{t\in [0,T]\mid \gamma(t)=0}$
then $$A(\gamma\mid_{[S,T]})\geq A(x\mid_{[0,T]})\,.$$
Setting $\gamma_1=\gamma\mid_{[S,T]}$ we have
$\gamma_1(t)\neq 0$ for all $t\in (S,T]$. Thus we can set
$\gamma_1(t)=r(t)s(t)$, where
$r(t)=\abs{\gamma_1(t)}=I(\gamma_1(t))^{1/2}$ for $t>S$.
Obviously we have that
$\abs{s(t)}=1$ for all $t>S$, and the action of $\gamma_1$ can be
written
$$A(\gamma_1)=\frac{1}{2}\int_S^T \dot r(t)^2\,dt\,+\,
\frac{1}{2}\int_S^T r(t)^2\abs{\dot s(t)}^{\,2}\,dt\,+\, \int_S^T
r(t)^{-2\kappa}U_\kappa(s(t))\,dt\,.$$ Since $x_0$ is minimal, we have
$U(s(t))\geq U_0=U\kappa(x_0)$ for all $t>S$. Moreover, we have
$$A(\gamma_1)\geq A(r\,x_0)=\int_S^T \left(\frac{1}{2}\;\dot r(t)^2\,dt\,+\,
U_0 r(t)^{-2\kappa}\right)\,dt\,.$$
Note that the last integral is nothing but the Lagrangian action of the curve $r(t):[S,T]\to\R^+$ for the one dimensional homogeneous one center problem, and as we have see the absolute minima of the Lagrangian action is reached by the zero energy solution $r(t)=c\,t^{1/1+\kappa}$. But in that case, the last integral is also de Lagrangian action of the parabolic motion $x(t)$ restricted to the interval $[0,T]$ for the N-body problem.
We deduce that
$$A(\gamma)\geq A(\gamma_1)\geq \int_0^T \left(\frac{1}{2}\;\dot r(t)^2\,dt\,+\,
U_0 r(t)^{-2\kappa}\right)\,dt\,=A(x\mid_{[0,T]})$$
hence we conclude that the solution $x(t)$ is globally minimizing.

\subsection{Properties of the action potential and proof of theorem
\ref{potencial.holder}.}

We start this section by showing that the action potential is a distance function
on $E^N$.

\begin{proposition}
\label{phi.dist} For all $x,y\in E^N$ we have $\phi(x,y)=0$ if and
only if $x=y$.
\end{proposition}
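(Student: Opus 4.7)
The plan is to prove the two implications separately. The implication ``$x=y \Rightarrow \phi(x,y)=0$'' splits according to whether $x$ is a collision configuration. If $x\in\Omega$, the constant path $\gamma(t)\equiv x$ on $[0,T]$ lies in $\calC(x,x,T)$ with action $T\,U_\kappa(x)$, so letting $T\to 0^+$ gives $\phi(x,x)=0$. If $x\notin\Omega$ this fails because $U_\kappa(x)=+\infty$, so I would appeal directly to theorem \ref{potencial.holder}: picking a sequence $z_n\in\Omega$ with $\norm{z_n-x}\to 0$, the triangle inequality and symmetry of $\phi$ yield
\[
\phi(x,x)\;\le\;\phi(x,z_n)+\phi(z_n,x)\;\le\;2\eta\,\norm{z_n-x}^{1-\kappa}\;\longrightarrow\;0.
\]

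For the reverse direction, assume $x\neq y$; I would show $\phi(x,y)>0$ via a Maupertuis-style argument. The pointwise AM-GM inequality $\tfrac{1}{2}\abs{v}^2+U_\kappa(z)\ge \abs{v}\sqrt{2\,U_\kappa(z)}$, applied to $(v,z)=(\dot\gamma(t),\gamma(t))$ and integrated, shows that every $\gamma\in\calC(x,y,T)$ satisfies
\[
A(\gamma)\;\ge\;\int_0^T\abs{\dot\gamma(t)}\sqrt{2\,U_\kappa(\gamma(t))}\,dt,
\]
i.e.\ the action dominates the Jacobi length of $\gamma$ at zero energy. To turn this into a positive constant, I would pick $\varepsilon>0$ small enough that $y\notin\bar B(x,\varepsilon)$ (possible because $x\neq y$) and so that $U_\kappa\ge u_0$ on $\bar B(x,\varepsilon)$ for some $u_0>0$. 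In the non-collision case $x\in\Omega$, continuity of $U_\kappa$ at $x$ allows $u_0=U_\kappa(x)/2$. In the collision case, some pair $(i,j)$ satisfies $r_{ij}(x)=0$; on the mass-norm $\varepsilon$-ball one then has $r_{ij}(z)\le C\varepsilon$, giving $U_\kappa(z)\ge m_im_j(C\varepsilon)^{-2\kappa}>0$.

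With this setup, any $\gamma\in\calC(x,y,T)$ must leave $\bar B(x,\varepsilon)$; if $\tau$ denotes the first exit time, then by continuity $\abs{\gamma(\tau)-x}=\varepsilon$ and $\int_0^\tau\abs{\dot\gamma(t)}\,dt\ge\varepsilon$, so
\[
A(\gamma)\;\ge\;\sqrt{2u_0}\int_0^\tau \abs{\dot\gamma(t)}\,dt\;\ge\;\varepsilon\sqrt{2u_0}\;>\;0,
\]
uniformly in $\gamma$ and $T$, and hence $\phi(x,y)\ge\varepsilon\sqrt{2u_0}>0$. The only delicate point is the collision case $x\notin\Omega$, where the constant-path trick fails for ``$\Leftarrow$'' (circumvented by invoking theorem \ref{potencial.holder}) and $U_\kappa$ is discontinuous at $x$ for ``$\Rightarrow$'' (handled by the elementary observation that $U_\kappa$ is in fact \emph{large} near a collision, so the needed pointwise lower bound is immediate).
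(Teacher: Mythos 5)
Your proof is correct, but it takes a genuinely different route from the paper's on the substantive direction. For $x\neq y$, the paper does not use the Maupertuis bound at all: it looks at the first time $T_0$ at which $\norm{\gamma(t)-x}$ reaches $d=\norm{y-x}$ and splits into two cases, bounding $A(\gamma)$ from below by $\int_0^{T_0}U_\kappa(\gamma)\,dt\geq T_0\min\set{U_\kappa(z)\tq\norm{z-x}\leq d}$ when $T_0\geq 1$, and by the kinetic term $\frac{m}{2T_0}\,d^{\,2}\geq\frac{m}{2}\,d^{\,2}$ (Cauchy--Schwarz on one coordinate) when $T_0\leq 1$. Your single inequality $\frac12\abs{v}^2+U_\kappa\geq\abs{v}\sqrt{2U_\kappa}$ fuses the two cases into one exit-time estimate that is manifestly uniform in $T$; it is cleaner and also makes transparent the remark in the paper that $\phi$ is the zero-energy Maupertuis distance. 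The only point worth policing is the lower bound $U_\kappa\geq u_0>0$ on $\bar B(x,\varepsilon)$; your case split (continuity on $\Omega$ versus blow-up at a collision) works, though the uniform observation that mutual distances are bounded above on any bounded set already gives it without cases, and is what the paper uses for its constant $C$. For the direction $x=y$, the paper's argument is more elementary and self-contained: it takes any finite-action path $\sigma$ issuing from $x$ and runs out-and-back along $\sigma|_{[0,T/2]}$, so that $A(\gamma_T)=2A(\sigma|_{[0,T/2]})\to 0$ as $T\to 0$; this handles collision configurations without invoking theorem \ref{potencial.holder} and its cluster machinery. Your appeal to that theorem is nevertheless legitimate --- its proof rests on proposition \ref{upper.bound.epsilon} and corollary \ref{bound.phixxt}, not on proposition \ref{phi.dist}, so there is no circularity --- and in fact you could apply it directly with $y=x$ to get $\phi(x,x)\leq\eta\cdot 0=0$, making the approximating sequence $z_n$ and the triangle inequality unnecessary.
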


\begin{proof} Let $x\in E^N$ be a configuration, and choose a path
$\sigma:[0,1]\to E^N$ which satisfies $\sigma(0)=x$ and
$A(\sigma)<+\infty$. Then define for $0<T\leq 2$ the curve
$\gamma_T\in\calC(x,x,T)$ by $\gamma_T(t)=\sigma(t)$ if $t\leq T/2$,
and $\gamma_T(t)=\sigma(T-t)$ if $t\geq T/2$. It is not difficult to
see that $A(\gamma_T)\to 0$ as $T\to 0$, from which it follows that
$\phi(x,x)=0$ for all $x\in E^N$.

To see that the condition is necessary, take any two configurations
$x=(r_1,\dots,r_N)$ and $y=(s_1,\dots,s_N)$ in $E^N$, and a path
$\gamma=(\gamma_1,\dots,\gamma_N)\in \calC(x,y)$. If $d=\norm{y-x}$,
and $\gamma$ is defined on $[0,T]$, then it must exist $T_0\in
[0,T]$ such that $\norm{\gamma(T_0)-x}=d$ and
$\norm{\gamma(t)-x}\leq d$ for all $t\in [0,T_0]$. Moreover, we must
have $d=\norm{\gamma_i(T_0)-r_i}_E$ for some $i\in\set{1,\dots,N}$.
If $T_0\geq 1$, we can write
$$A(\gamma)\geq A(\gamma\mid_{[0,T_0]})\geq \int_0^{T_0}
U_\kappa(\gamma(t))\,dt\geq C>0\,,$$ where
$C=\min\set{U_\kappa(z)\tq \norm{z-x}\leq d}$. If $T_0\leq 1$ we
have
$$A(\gamma)\geq A(\gamma\mid_{[0,T_0]})\geq \frac{m_i}{2}\int_0^{T_0}
\norm{\dot\gamma_i(t)}_E^2\,dt\geq\frac{m\,d^2}{2}\,,$$ where
$m=\min\set{m_1,\dots,m_N}$. The last inequality follows from the
fact that $\gamma_i$ is absolutely continuous and the
Cauchy-Schwartz inequality. Therefore, we conclude that if
$\phi(x,y)=0$, then $d=0$ and $x=y$. \end{proof}\medbreak

In the sequel we will denote $\delta(z)$ the minimal distance
between the bodies of the configuration $z$. More precisely,
$\delta:E^N\to\R^+$ will be the function defined by
$\delta(z)=\min\set{\norm{z_i-z_j}_E\tq i<j}$, where
$z=(z_1,\dots,z_N)$. Thus the set of configurations without
collisions is nothing but $\Omega=\set{z\in E^N\tq \delta(z)>0}$. From the following proposition we can easily deduce that the action potential is a locally Lipschitz function in $\Omega\times\Omega$.

\begin{proposition}\label{phi.loc.lip} There are continuous functions $k(z)>0$
and $\epsilon(z)>0$ in $\Omega$ such that, if $x,z \in E^N$ satisfy
$\norm{x}<\epsilon(z)$, then $\phi(z,z+x)\leq k(z)\norm{x}$.
\end{proposition}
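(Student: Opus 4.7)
The plan is to bound $\phi(z,z+x)$ by the action of a simple straight-line path and then optimize over the travel time.

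\medbreak

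First I would define the local parameters of the collision-free region. For $z\in\Omega$, set $\delta(z)=\min_{i<j}\norm{z_i-z_j}_E>0$; this is a continuous positive function on $\Omega$. Choose $\epsilon(z)=\delta(z)/4$ (measured in the max norm $\norm{\cdot}$). Then whenever $\norm{x}<\epsilon(z)$, for every $s\in[0,1]$ and every pair $i<j$ the triangle inequality gives
$$\norm{(z_i+sx_i)-(z_j+sx_j)}_E\;\geq\;\delta(z)-2\norm{x}\;\geq\;\delta(z)/2\,,$$
so the segment $z+sx$ stays in $\Omega$ with uniformly controlled mutual distances. In particular,
$$U_\kappa(z+sx)\;\leq\;\sum_{i<j}m_im_j\,(\delta(z)/2)^{-2\kappa}\;=:\;K(z)\,,$$
a continuous function of $z$ on $\Omega$.

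\medbreak

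Next I would parametrize the segment as $\gamma_T:[0,T]\to E^N$, $\gamma_T(t)=z+(t/T)\,x$, which lies in $\calC(z,z+x,T)$. Since $\dot\gamma_T(t)=x/T$, the kinetic integral is $\tfrac{1}{2}\abs{x}^2/T$, and the potential integral is bounded by $K(z)\,T$, so
$$\phi(z,z+x)\;\leq\;A(\gamma_T)\;\leq\;\frac{\abs{x}^2}{2T}+K(z)\,T\,.$$
Minimizing the right-hand side over $T>0$ gives the optimal choice $T^{*}=\abs{x}/\sqrt{2K(z)}$ and the bound
$$\phi(z,z+x)\;\leq\;\sqrt{2K(z)}\,\abs{x}\,.$$
Finally, using the equivalence $\abs{x}\leq\sqrt{M}\,\norm{x}$ between the mass norm and the max norm (with $M=m_1+\cdots+m_N$), we obtain the desired inequality with
$$k(z)\;=\;\sqrt{2MK(z)}\,,$$
which is continuous and positive on $\Omega$.

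\medbreak

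There is essentially no hard step here: the required smallness of $\norm{x}$ is dictated solely by the need to keep $\gamma_T$ inside $\Omega$ with a quantitative lower bound on mutual distances, and the optimization over $T$ is elementary. The only minor subtlety to check is that the optimal time $T^{*}$ is finite and positive (obvious) and that $K(z)$ and $\epsilon(z)$ depend continuously on $z$, which follows from the continuity of $\delta$ on $\Omega$.
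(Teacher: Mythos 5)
Your proof is correct and follows essentially the same route as the paper: the linear path $z+(t/T)x$, the lower bound $\delta(z)/2$ on mutual distances forced by the choice $\epsilon(z)=\delta(z)/4$, and a choice of $T$ proportional to the size of $x$ (the paper simply takes $T=\norm{x}$ rather than optimizing, which only changes the constant $k(z)$). The only point to add is the trivial case $x=0$, where your optimal $T^{*}$ degenerates but the claim reduces to $\phi(z,z)=0$, which is known.
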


\begin{proof} We give the proof for $\epsilon(z)=\delta(z)/4$. Since
$z$ is without collisions, we have $\epsilon(z)>0$. For $T>0$ we define
the curve $\gamma:[0,T]\to E^N$, by $\gamma(t)=z+(t/T)x$. If
$z=(z_1,\dots,z_N)$ and $x=(r_1,\dots,r_N)$, then
$\gamma(t)=(\gamma_1(t),\dots,\gamma_N(t))$, where
$\gamma_i(t)=z_i+(t/T)r_i$. Hence, for $i<j$ and $t\in[0,T]$ we can
write
$$\gamma_{ij}(t)=\norm{\gamma_i(t)-\gamma_j(t)}_E\geq
\norm{z_i-z_j}_E-(t/T)\norm{r_i-r_j}_E\geq \delta(z)/2\,,$$ and
$$U_\kappa(\gamma(t))=\sum_{i<j}m_im_j\,\gamma_{ij}(t)^{-2\kappa}
\leq M^2N^2\left[\delta(z)/2\right]^{-2\kappa}\,.$$ Therefore, using
that $\abs{x}^2=I(x)\leq MN\norm{x}^2$, we deduce that
\begin{eqnarray*}
A(\gamma)&=&\frac{1}{2}\int_0^T\abs{x/T}^2\,dt\,+\,\int_0^TU(\gamma(t))\,dt\\
&&\\&\leq&MN\,\norm{x}^2/2T\,+\,M^2N^2\,
\left[\delta(z)/2\right]^{-2\kappa}\,T\,.
\end{eqnarray*}
If $x=0$ there is nothing to prove, since we already know that
$\phi(z,z)=0$. If $x\neq 0$, we can take $T=\norm{x}$, and the above
estimation gives $A(\gamma)\leq k(z)\norm{x}$ for $k(z)=MN/2+\,M^2N^2\,
\left(\delta(z)/2\right)^{-2\kappa}$. \end{proof}\medbreak

We introduce now a notion of \textsl{cluster partition} of a subset
$A\subset E$ adapted to our purposes. Given $\lambda>1$, we will say
that the set $\set{r_1,\dots,r_K}\subset E$ defines a
$\lambda$-cluster partition of size $R>0$ of $A$, if the following
two conditions are satisfied:
\begin{enumerate}
\item $\norm{r_i-r_j}_E\geq 2\lambda R\;$ for all $1\leq i< j\leq K$,
\item $A$ is contained in the union $\bigcup_{i=1}^{\,K} B(r_i,R)$.
\end{enumerate}
It is clear that if $A$ is finite and $R$ is small enough, then $A$
defines itself a cluster partition of size $R$ of $A$. It is also
clear that if $A$ is bounded, then any $r\in A$ defines a trivial
cluster partition of size $R$ for any $R>\textrm{diam}(A)$.

We will need the following lemma.

\begin{lemma}\label{clusters} Given $\lambda>1$, $A=\set{r_1,\dots,r_N}\subset E$ and
$\epsilon>0$, there is a subset $A'\subset A$, and $R(\epsilon)>0$
such that: (i) $\epsilon\leq R(\epsilon)<(2\lambda)^N\,\epsilon$,
(ii) $A'$ defines a $\lambda$-cluster partition of size
$R(\epsilon)$ of $A$.
\end{lemma}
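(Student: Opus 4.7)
The plan is to build $A'$ and $R(\epsilon)$ by an iterative greedy merging procedure. I initialize $A_0 = A$ and $R_0 = \epsilon$, maintaining the invariant $A \subseteq \bigcup_{s \in A_k} B(s, R_k)$. At step $k$, if every pair of distinct points in $A_k$ has mutual distance at least $2\lambda R_k$ in $E$, then $A_k$ is already a $\lambda$-cluster partition of size $R_k$ of $A$, and I stop with $A' = A_k$ and $R(\epsilon) = R_k$. Otherwise, I select $r, r' \in A_k$ with $\|r - r'\|_E < 2\lambda R_k$, remove $r'$ to form $A_{k+1} = A_k \setminus \{r'\}$, and rescale by $R_{k+1} = 2\lambda R_k$.

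Since $|A_k|$ strictly decreases by one at every iteration and $|A_0| = N$, the procedure terminates after at most $N-1$ steps. Consequently $R(\epsilon) \leq (2\lambda)^{N-1}\epsilon < (2\lambda)^N \epsilon$, which yields (i); and the separation condition in (ii) is precisely the stopping criterion applied to $A' = A_K$.

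The main technical point is the coverage condition in (ii). When $r'$ is removed at step $k$, a point $p \in A$ previously within $R_k$ of $r'$ satisfies $\|p - r\|_E \leq \|p - r'\|_E + \|r' - r\|_E < R_k + 2\lambda R_k$. Following the chain of successive merges, the distance from any $p \in A$ to its surviving ancestor $r^* \in A'$ is bounded by a telescoping sum $\sum_i R_{k_i + 1}$ over distinct step indices $k_i \leq K - 1$; since $R_k = (2\lambda)^k \epsilon$ grows geometrically with ratio $2\lambda > 2$, this sum is controlled by $\tfrac{2\lambda}{2\lambda - 1}\, R_K < 2 R_K$.

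The main obstacle is matching this coverage bound with $R(\epsilon)$ exactly, not merely up to a constant factor. I would resolve this by refining the merging rule: whenever possible, select pairs with $\|r - r'\|_E \leq (2\lambda - 1) R_k$, for which the estimate above gives $\|p - r\|_E \leq R_{k+1}$ directly, so the coverage invariant passes cleanly from step $k$ to step $k+1$. In the borderline case $\|r - r'\|_E \in ((2\lambda - 1) R_k,\, 2\lambda R_k)$, I first raise $R_k$ modestly to $\|r - r'\|_E/(2\lambda - 1) < 2 R_k$ before merging; the accumulated cost across the at most $N-1$ merges remains within the generous bound $(2\lambda)^N \epsilon$ given in (i).
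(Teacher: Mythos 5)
Your construction is the same greedy removal scheme as the paper's, and you have correctly put your finger on the one delicate point in it: when removed points chain together, the distance from a point of $A$ to its surviving representative is controlled only by a geometric sum, which is at most $\frac{2\lambda}{2\lambda-1}R_K$ but not at most $R_K$, so the coverage half of condition (ii) does not come for free (the paper's own argument is the unrepaired version of this scheme and is silent on this point). Your repair of the merging rule -- merge only pairs at distance at most $(2\lambda-1)R_k$, so that $R_k+(2\lambda-1)R_k=2\lambda R_k=R_{k+1}$ and the coverage invariant propagates exactly, and in the borderline case first inflate the radius to $\norm{r-r'}_E/(2\lambda-1)$ -- is sound and does make (ii) airtight.

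The gap is in your very last claim. That ``the accumulated cost \dots remains within the generous bound $(2\lambda)^N\epsilon$'' is asserted rather than verified, and it fails when $\lambda$ is close to $1$. A borderline step multiplies the radius by up to $\frac{2\lambda}{2\lambda-1}\cdot 2\lambda=\frac{(2\lambda)^2}{2\lambda-1}$, so after the at most $N-1$ merges the final radius can reach $(2\lambda)^{N-1}\left(\frac{2\lambda}{2\lambda-1}\right)^{N-1}\epsilon$, and nothing prevents every one of the merges from being borderline. Condition (i) leaves only one spare factor of $2\lambda$ beyond the $(2\lambda)^{N-1}$ consumed by the merges themselves, so you need $\left(\frac{2\lambda}{2\lambda-1}\right)^{N-1}<2\lambda$; as $\lambda\to 1^{+}$ the left-hand side tends to $2^{N-1}$ while the right-hand side tends to $2$, so the inequality fails for every $N\ge 3$ once $\lambda$ is close enough to $1$. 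The failure is invisible in the paper's application (there $\lambda=24N$, hence $\left(\frac{2\lambda}{2\lambda-1}\right)^{N-1}<e^{(N-1)/(48N-1)}<2<2\lambda$, and in any case only the existence of some bound $R(\epsilon)\le C(N,\lambda)\,\epsilon$ is ever used), but as a proof of the lemma as stated, for all $\lambda>1$, the bookkeeping does not close: you must either weaken (i) or argue differently for small $\lambda$.
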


\begin{proof} We reason recursively. We begin setting $A'_1=A$. If
$A'_1$ does not define a $\lambda$-cluster partition of size
$\epsilon$, then there are $r,s \in A'_1$ such that $\norm{r-s}_E<
2\lambda\epsilon\;$. If that is the case, we define
$A'_2=A'_1\setminus\set{s}$. Then we reason as before: if $A'_2$
does not defines a $\lambda$-cluster partition of size
$2\lambda\,\epsilon$ then we have $r,s\,\in A'_2$ such that
$\norm{r-s}_E< (2\lambda)^2\,\epsilon\;$, and we set
$A'_3=A'_2\setminus\set{s}$. It is clear that the process finish at
the most in $N$ steps. \end{proof}\medbreak

Using the existence of cluster partitions we will prove the following proposition, from which theorem \ref{potencial.holder} can be deduced as a simple corollary.

\begin{proposition}\label{upper.bound.epsilon} There are positive constants
$\alpha_1,\;\beta_1\,>0$ such that, for all $x,y\in E^N$ and for all $T>0$ we have
$$\phi(x,y,T)\leq \alpha_1\,T^{-1}\epsilon^2+\beta_1\,T\epsilon^{-2\kappa}\,,$$
whenever $\epsilon>\norm{x-y}$. The constants $\alpha_1$ and $\beta_1$ only depend
on the degree of homogeneity of the potential ($-2\kappa$), the
number of bodies $N$, and their masses.
\end{proposition}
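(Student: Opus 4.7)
The plan is to reduce to the setting of Proposition \ref{curva.min} by decomposing the bodies of $x$ into clusters of controlled diameter and moving each cluster independently, exploiting $\norm{x-y}<\epsilon$ to ensure that $y$ respects essentially the same cluster decomposition. The bound will emerge in the form $R^2/T+TR^{-2\kappa}$ at the cluster scale $R$, and then $R\le(2\lambda)^N\epsilon$ from Lemma \ref{clusters} will convert it into the claimed inequality.

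First I fix a constant $\lambda>1$ depending only on $N$, chosen large enough that the mutual separation $2\lambda R$ of the cluster centers dominates the maximal radii of the balls containing the intermediate curves produced by Proposition \ref{curva.min}; concretely $\lambda=12N+1$ will do. Applying Lemma \ref{clusters} to $A=\{r_1,\dots,r_N\}$ (the bodies of $x$) with this $\lambda$ and the given $\epsilon$ yields a $\lambda$-cluster partition of size $R:=R(\epsilon)$ with $\epsilon\le R<(2\lambda)^N\epsilon$. This induces a partition of the indices $\{1,\dots,N\}$ into clusters $C_1,\dots,C_K$ each of whose bodies lies in some $B(r_{i_k},R)$; since $\norm{x-y}<\epsilon\le R$, the bodies $s_j$ of $y$ with $j\in C_k$ all lie in $B(r_{i_k},2R)$.

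Next I apply Proposition \ref{curva.min} cluster-by-cluster. For each $k$, I treat $(r_j)_{j\in C_k}$ and $(s_j)_{j\in C_k}$ as configurations of $|C_k|\le N$ bodies both contained in $B(r_{i_k},2R)$, obtaining a curve on $[0,T]$ between them that stays inside $B(r_{i_k},12NR)$, with intra-cluster kinetic action at most $\alpha\,T^{-1}(2R)^2$ and intra-cluster potential action at most $\beta\,T(2R)^{-2\kappa}$. Splicing these $K\le N$ motions coordinate by coordinate produces a curve $\gamma\in\calC(x,y,T)$, whose intra-cluster energy contributions sum to at most $N$ times the bounds just listed.

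The one remaining term is the inter-cluster potential, and here the choice of $\lambda$ is decisive: for any $j\in C_k$, $j'\in C_{k'}$ with $k\ne k'$ and any $t\in[0,T]$, the reverse triangle inequality applied to the cluster centers gives $\norm{\gamma_j(t)-\gamma_{j'}(t)}_E\ge 2\lambda R-2\cdot 12NR\ge 2R$. Hence the inter-cluster contribution to $\int_0^T U_\kappa(\gamma(t))\,dt$ is bounded by $M^2N^2\,T(2R)^{-2\kappa}$. Collecting everything and using $R^2\le(2\lambda)^{2N}\epsilon^2$ together with $R^{-2\kappa}\le\epsilon^{-2\kappa}$ (which follows from $R\ge\epsilon$) produces constants $\alpha_1,\beta_1$ of the required form. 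The single technical obstacle is the calibration of $\lambda$ in terms of $N$ guaranteeing inter-cluster separation along the entire spliced motion; once that is fixed, everything else is bookkeeping through Proposition \ref{curva.min} and Lemma \ref{clusters}.
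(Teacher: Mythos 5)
Your proposal is correct and follows essentially the same route as the paper: apply Lemma \ref{clusters} to the bodies of $x$ with a $\lambda$ comparable to $N$ (the paper takes $\lambda=24N$, you take $12N+1$, both of which make the inter-cluster separation exceed twice the radius $6N\cdot 2R$ of the balls confining the curves from Proposition \ref{curva.min}), run Proposition \ref{curva.min} in parallel on each cluster, bound the inter-cluster potential by the resulting separation, and convert $R(\epsilon)$ into $\epsilon$ via $\epsilon\le R(\epsilon)<(2\lambda)^N\epsilon$. The only differences are in the explicit constants.
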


\begin{proof}
Fix a configuration
$x=(r_1,\dots,r_N)\in E^N$, and denote by $A_x$ the set
$\set{r_1,\dots,r_N}\subset E$. Let $y=(s_1,\dots,s_N)\in E^N$ be any other configuration. If we apply lemma \ref{clusters} to
$A_x$ with $\epsilon>\norm{y-x}$ and $\lambda=24N$, we conclude that
there are $r_{i_1}\,,\dots, r_{i_K}\in A_x$, and $R(\epsilon)>0$
with the following properties.
\begin{enumerate}
\item $\epsilon\leq R(\epsilon)< (48N)^N\,\epsilon$,
\item for all $1\leq j<k\leq K$, we have $\norm{r_{i_j}-r_{i_k}}_E\geq
48N\,R(\epsilon)$, and
\item $A_x\cup A_y$ is contained in the disjoint union
$\bigcup_{j=1}^{\,K}B_j$ where $B_j=B(\,r_{i_j}\,,2R(\epsilon))$.
\end{enumerate}
Therefore, both configurations $x$ and $y$ are decomposed in $K$
clusters, each one contained in a ball $B_j$. More precisely, we
have a partition $\set{1,\dots,N}=I_1\cup\dots\cup I_K$ such that
$i\in I_j$ if and only if both $r_i$ and $s_i$ are in $B_j\,$.
Denote by $N_j=\textrm{card}(I_j)$ the number of bodies in cluster
$j$, and by $M_j$ the total mass of this cluster, that is
$M_j=\sum_{\,i\in I_{j}}m_i$. Thus we have $N=N_1+\dots+N_K$ and
$M=M_1+\dots+M_K$.

We consider now the $N_j\,$-body problem composed by the bodies in
the ball $B_j\,$. Given $T>0$, we apply proposition \ref{curva.min}
in each ball $B_j\,$, $j=1,\dots,K$, with initial and final
condition conformed by the $N_j$ bodies of $x$ and $y$ contained in
$B_j\,$. Therefore we obtain, a path
$\gamma=(\gamma_1,\dots,\gamma_N)\in\calC(x,y,T)$ such that for all
$j=1,\dots,K$ we have,
\begin{enumerate}
\item If $i\in I_j\,$, then $\gamma_i(t)\in
B(r_{i_j},12N\,R(\epsilon))$ for all $t\in [0,T]$,
\item $$T_j=\frac{1}{2}\int_0^T\sum_{i\in
{I_j}}m_i\norm{\dot\gamma_i(t)}_E^2\,dt\leq\,
\,10^6\,\frac{1+\kappa}{1-\kappa}\,M_j\,N_j^6\;R(\epsilon)^2/\,T\,,\textrm{
and}$$
\item \begin{eqnarray*}
    W_j &=&\int_0^T\sum_{i,\,k\,\in I_j}^{i<k}m_im_k
    \norm{\gamma_i(t)-\gamma_k(t)}_E^{-2\kappa}\,dt\\
    &\leq&
    2\,\frac{1+\kappa}{1-\kappa}\,N_j^{2\kappa+2}M_j^2\,
    12^{-2\kappa}\,R(\epsilon)^{-2\kappa}\,T\,.
    \end{eqnarray*}
\end{enumerate}
Notice that the action of the curve
$\gamma=(\gamma_1,\dots,\gamma_N)\in\calC(x,y,T)$ is
$$A(\gamma)=\sum_{j=1}^K T_j\,+\,\sum_{j=1}^K W_j\,+W_0$$ where $W_0$
is the integral of the terms of the potential function $U_\kappa$
corresponding to pairs of bodies in different clusters. More
precisely,
$$W_0=\int_0^T\sum_{1\leq j<\,l\leq K}\;\;\sum_{i\in I_j,\,k\in I_l}
m_im_k\,\norm{\gamma_i(t)-\gamma_k(t)}_E^{-2\kappa}\,dt\,.$$ Since
the balls $B(r_{i_j},24N R(\epsilon))$ are disjoint, we deduce that
$$W_0\leq N^2M^2\,(24N)^{-2\kappa}\,R(\epsilon)^{-2\kappa}\,T\,.$$
Using that using that
$R(\epsilon)<(24N)^N\,\epsilon$, we can write $$A(\gamma)<\alpha_1\,T^{-1}\epsilon^2+\beta_1\,T\epsilon^{-2\kappa}\,,$$
for some positive constants $\alpha_1$ and $\beta_1$ only depending on $N$, $M$ and $\kappa$.
\end{proof}\medbreak

\begin{corollary}\label{bound.phixxt}
There is a positive constant $\mu>0$ such that for all $x\in E^N$ $$\phi(x,x,T)\leq \mu\, T^{(1-\kappa)/(1+\kappa)}\,.$$
\end{corollary}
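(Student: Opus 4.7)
The plan is to apply proposition \ref{upper.bound.epsilon} with $y=x$ and optimize over the free parameter $\epsilon$. Since $\|x-y\|=0$, the hypothesis $\epsilon>\|x-y\|$ is satisfied by every $\epsilon>0$, so we obtain
$$\phi(x,x,T)\leq \alpha_1\,T^{-1}\epsilon^2+\beta_1\,T\epsilon^{-2\kappa}$$
for all $\epsilon>0$. The right-hand side depends only on $T$ and $\epsilon$, not on $x$, so the resulting bound will automatically be uniform in $x$.

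Next I would minimize the function $f(\epsilon)=\alpha_1 T^{-1}\epsilon^2+\beta_1 T\epsilon^{-2\kappa}$ over $\epsilon>0$. Setting $f'(\epsilon)=0$ gives $\epsilon^{2+2\kappa}=(\kappa\beta_1/\alpha_1)\,T^2$, so the optimal choice is
$$\epsilon_*=\bigl(\kappa\beta_1/\alpha_1\bigr)^{1/(2+2\kappa)}\,T^{1/(1+\kappa)}.$$
Substituting back, both terms scale as $T^{(1-\kappa)/(1+\kappa)}$ (this is the homogeneity that Euler's identity forces on any expression of the form $aT^{-1}\epsilon^2+bT\epsilon^{-2\kappa}$ evaluated at its minimum). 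Gathering the prefactors into a single constant $\mu=\mu(\alpha_1,\beta_1,\kappa)>0$ yields the claimed inequality
$$\phi(x,x,T)\leq \mu\,T^{(1-\kappa)/(1+\kappa)}.$$

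There is essentially no obstacle here: the corollary is a direct one-line computation from proposition \ref{upper.bound.epsilon}. The only minor point worth noting is that the exponent $(1-\kappa)/(1+\kappa)$ is precisely the one appearing in the action of the parabolic homothetic motion through a minimal central configuration (as computed in the subsection on minimal configurations), which confirms that the scaling is sharp and not an artifact of the construction.
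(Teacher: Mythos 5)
Your argument is correct and is essentially the paper's own proof: the paper simply takes $\epsilon=T^{1/(1+\kappa)}$ in proposition \ref{upper.bound.epsilon}, which is (up to an irrelevant constant factor) exactly the optimal $\epsilon_*$ you compute, and both terms then scale as $T^{(1-\kappa)/(1+\kappa)}$. The extra optimization step changes nothing of substance, and your observation that the hypothesis $\epsilon>\norm{x-y}=0$ is vacuous when $y=x$ is the only point that needed checking.
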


\begin{proof} It suffices to take $\epsilon=T^{1/1+\kappa}$ in proposition \ref{upper.bound.epsilon}.
\end{proof}\medbreak

\begin{proof}[Proof of theorem \ref{potencial.holder}.]
By the previous corollary we know that $\phi(x,x)=0$ for all $x\in E^N$. If $x,y\in E^N$ are two different configurations, then proposition \ref{upper.bound.epsilon} says that
$$\phi(x,y,T)<\alpha_1\,T^{-1}\epsilon^2+\beta_1\,T\epsilon^{-2\kappa}\,,$$
for all $T>0$ and for any $\epsilon>\norm{x-y}$. Since the right hand of this inequality is a continous function of $\epsilon>0$ we also have
$$\phi(x,y,T)\leq\alpha_1\,T^{-1}\norm{x-y}^2+\beta_1\,T\norm{x-y}^{-2\kappa}\,,$$
and the proof is achieved taking $T=\norm{x-y}^{1+\kappa}$.

\end{proof}\medbreak

\subsection{Homogeneity of the action potential.}

There is a property of homogeneity of
the action potential due to the homogeneity of the potential
function $U_\kappa$. We did not use this property in the
above proofs, but we think that it is useful to complete the
picture of the action potential. The proof can be done
reparametrizing conveniently homothetic paths of a given path.

\begin{proposition} If $\lambda>0$, then
$\phi(\lambda\,x,\lambda\,y)=\lambda^{1-\kappa}\phi(x,y)$ for all
$x,y\in E^N$.
\end{proposition}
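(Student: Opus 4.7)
The plan is to exploit the scaling symmetry of the Lagrangian: $U_\kappa$ is homogeneous of degree $-2\kappa$, while the kinetic term is homogeneous of degree $2$ in velocities. Given a curve $\gamma\in\calC(x,y,T)$ I would build a comparison curve $\tilde\gamma\in\calC(\lambda x,\lambda y,\lambda^{1+\kappa}T)$ by the simultaneous space-time rescaling
$$\tilde\gamma(s)=\lambda\,\gamma(s/\lambda^{1+\kappa})\,,\qquad s\in[0,\lambda^{1+\kappa}T]\,.$$
The exponent $1+\kappa$ in the time rescaling is forced by Maupertuis/zero-energy homogeneity: it is the unique value for which the kinetic and potential parts of the action scale by the same factor.

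The main step is the direct computation of $A(\tilde\gamma)$. Using $\dot{\tilde\gamma}(s)=\lambda^{-\kappa}\dot\gamma(s/\lambda^{1+\kappa})$ and a change of variable $t=s/\lambda^{1+\kappa}$, the kinetic term becomes
$$\frac{1}{2}\int_0^{\lambda^{1+\kappa}T}\abs{\dot{\tilde\gamma}(s)}^2\,ds=\lambda^{1-\kappa}\cdot\frac{1}{2}\int_0^T\abs{\dot\gamma(t)}^2\,dt\,.$$
For the potential part, homogeneity gives $U_\kappa(\lambda\gamma(t))=\lambda^{-2\kappa}U_\kappa(\gamma(t))$, so the same substitution yields
$$\int_0^{\lambda^{1+\kappa}T}U_\kappa(\tilde\gamma(s))\,ds=\lambda^{1-\kappa}\int_0^T U_\kappa(\gamma(t))\,dt\,.$$
Adding the two equalities gives $A(\tilde\gamma)=\lambda^{1-\kappa}A(\gamma)$.

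To conclude, I would note that the map $\gamma\mapsto\tilde\gamma$ is a bijection $\calC(x,y,T)\to\calC(\lambda x,\lambda y,\lambda^{1+\kappa}T)$ with inverse obtained by applying the analogous rescaling with $\lambda^{-1}$. Taking infima over the fibers gives $\phi(\lambda x,\lambda y,\lambda^{1+\kappa}T)=\lambda^{1-\kappa}\phi(x,y,T)$, and then taking the infimum over $T>0$ (and observing that $T\mapsto\lambda^{1+\kappa}T$ is a bijection of $(0,+\infty)$ onto itself) produces
$$\phi(\lambda x,\lambda y)=\inf_{T>0}\lambda^{1-\kappa}\phi(x,y,T)=\lambda^{1-\kappa}\phi(x,y)\,.$$
There is no genuine obstacle: the only point to watch is the bookkeeping of the Jacobians in the two integrals so that the kinetic and potential contributions rescale by the same factor $\lambda^{1-\kappa}$, which is exactly what dictates the choice of the time-dilation exponent.
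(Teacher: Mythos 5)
Your proof is correct and is precisely the argument the paper has in mind: the paper only sketches it in one sentence ("reparametrizing conveniently homothetic paths"), and your space--time rescaling $\tilde\gamma(s)=\lambda\,\gamma(s/\lambda^{1+\kappa})$ with the bijection of $\calC(x,y,T)$ onto $\calC(\lambda x,\lambda y,\lambda^{1+\kappa}T)$ is the standard way to carry it out. The computation of the two terms of the action and the final passage to the infimum over $T>0$ are all correct.
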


\section{Weak KAM theory}\label{wkam}

The relationship between global solutions of the
Hamilton-Jacobi equation and globally minimizing solutions of the
corresponding Lagrangian flow is well known. Let us recall that the
\textsl{Hamiltonian}, defined on $T^*E^N=E^N\times (E^*)^N$ is the
function
$$H(x,p)=\frac{1}{2}\,\abs{p}^2-U_\kappa(x)\,,$$
where $\abs{p}$ denotes the dual norm of $p\in(E^*)^N$ with respect
to the norm on $E^N$ induced by the mass scalar product. More
precisely, if we identify the space $E$ with its dual
$E^*$ using the scalar product $<\;,\;>_E$, and $p=(p_1,\dots,p_N)\in (E^*)^N$, then
$$\abs{p}^2=\sum_{i=1}^N\,m_i^{-1}\,\norm{p_i}_E^2\,.$$  A
closely related function is the \textsl{total energy}, defined on
$TE^N$ as $\calE=H\circ\Leg$, where $\Leg:TE^N\to T^*E^N$ is the
Legendre transform $\Leg(\,x\,;\,v_1,\dots,v_N)=
(\,x\,;\,p_1,\dots,p_N)$, $p_i=m_iv_i$. It is easy to see that
$\calE$ is a first integral of the motion.

We will prove the existence of critical global (weak) solutions for
the Hamilton-Jacobi equation $H(x,d_xu)=c$. The critical value of
this Hamiltonian can be defined as the infimum of the values of
$c\in\R$ such that the Hamilton-Jacobi equation admits global
subsolutions. Since $\inf_{E^N} U_\kappa(x)=0$, and constants
functions are global subsolutions for $c=0$, it follows that the
critical value is $c=0$. Therefore, we are interested in global
solutions of
$$\abs{d_xu}^2=2\,U_\kappa(x)\,.\hspace{2cm}\textrm{(HJ)}$$
We will obtain global solutions as fixed points of a continuous
semigroup acting on the set of weak subsolutions, namely the
Lax-Oleinik semigroup. There are no new ideas in the method that we
apply here. In fact, we will follow the scheme introduced by Fathi
in \cite{Fathi-CRAS1}, with some adjustments to our setting. As we
have said in the introduction, the difference is that we consider a space of H\"{o}lder functions on which the semigroup acts, and theorem
\ref{potencial.holder} will assure that the method works with this
space.

\subsection{The Lax-Oleinik semigroup.}

Given a continuous function $u:E^N\to \R$ and $t>0$, we
define $T^-_tu:E^N\to [-\infty,+\infty)$ by
$$T^-_tu(x)=\inf\set{u(y)+\phi(x,y,t)\tq y\in E^N}\,.$$
We also define $T^-_0u=u$ for all function $u$. The semigroup property follows from the definition . In other words, for any function $u$ we have that $T^-_t(T^-_su)=T^-_{t+s}u$ for all $t,s\geq 0$. We will restrict the semigroup to the set $\calH$ of dominated functions. More precisely, we define
$$\calH=\set{u:E^N\to \R \tq u(x)-u(y)\leq
\phi(x,y)\textrm{ for all }x,y\in E^N}\,.$$ Notice that $u:E^N\to \R$ is in $\calH$ if and only if $u\leq T^-_tu$ for all $t\geq 0$. On the other hand, $u\leq v$ implies that $T^-_tu\leq T^-_tv$ for all $t\geq 0$. Therefore, the semigroup property implies that $T^-_tu\in \calH$ for all $u\in\calH$. Also notice that $\calH$ is convex, and nonempty since it contains all constant functions.

In the sequel, the set $\calH$ will be endowed the compact open
topology, that is to say, the topology generated by the sets
$$U_K(u,\epsilon)=\set{v\in\calH\tq \abs{v(x)-u(x)}<\epsilon\textrm{
for all }x\in K}\,,$$ with $u\in\calH$, $K\subset E^N$ compact,
and $\epsilon>0$.

\begin{proposition}\label{semigroup.cont}
The map $T^-:\calH\times [0,+\infty)\to\calH$, $(u,t)\mapsto
T^-_tu$ is continuous.
\end{proposition}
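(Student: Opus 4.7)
The plan is to split joint continuity into a continuity-in-$t$ estimate that is uniform in $u$ and $x$, and a continuity-in-$u$ estimate that is uniform for $x$ in a compact set and $t$ in a compact subinterval of $(0,\infty)$; the two are then combined by a triangle inequality. For continuity in $t$, I will take $y=x$ in the defining infimum of $T^-_t u$ and invoke Corollary \ref{bound.phixxt} to get $T^-_s u(x)\leq u(x)+\phi(x,x,s)\leq u(x)+\mu\,s^{(1-\kappa)/(1+\kappa)}$. Since $u\in\calH$ yields $T^-_s u\geq u$ and $T^-_s u\in\calH$, the semigroup identity $T^-_{t+s}u=T^-_s(T^-_t u)$ upgrades this to the uniform bound
$$0\leq T^-_{t+s}u(x)-T^-_t u(x)\leq \mu\,s^{(1-\kappa)/(1+\kappa)}$$
valid for all $s,t\geq 0$, all $x\in E^N$, and all $u\in\calH$.

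Next, given a compact $K\subset E^N$ and $0<a\leq b$, I will show there is a bounded (hence relatively compact) set $B\subset E^N$ such that, for every $u\in\calH$, $x\in K$, and $t\in[a,b]$, any $y$ with $u(y)+\phi(x,y,t)\leq T^-_t u(x)+1$ belongs to $B$. The argument combines three inequalities: the domination bound $u(y)\geq u(x)-\phi(x,y)$ together with Theorem \ref{potencial.holder} to give $u(y)\geq u(x)-\eta\,\norm{x-y}^{1-\kappa}$; a kinetic lower bound $\phi(x,y,t)\geq c\,\norm{x-y}^2/t$ obtained from Cauchy--Schwarz applied to $\int_0^t|\dot\gamma|$, using that $U_\kappa\geq 0$; and the upper bound from the previous step. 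These combine to a quadratic-beats-H\"older inequality of the form $(c/b)\,\norm{x-y}^2\leq \mu\,b^{(1-\kappa)/(1+\kappa)}+1+\eta\,\norm{x-y}^{1-\kappa}$, bounding $\norm{x-y}$ by a constant that depends only on $K$, $a$, $b$, and the fixed constants $\mu,\eta,\kappa$, and not on $u$.

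With these two ingredients joint continuity follows. For $(u_n,t_n)\to(u,t)$ with $t>0$ and $K\subset E^N$ compact, the triangle inequality
$$|T^-_{t_n}u_n(x)-T^-_t u(x)|\leq |T^-_{t_n}u_n(x)-T^-_{t_n}u(x)|+|T^-_{t_n}u(x)-T^-_t u(x)|$$
splits the problem. The second summand is bounded by $\mu\,|t_n-t|^{(1-\kappa)/(1+\kappa)}$ uniformly in $x$ by the first step, so vanishes as $n\to\infty$. The first summand is at most $\sup_{y\in B}|u_n(y)-u(y)|$ via the elementary estimate $|\inf f-\inf g|\leq \sup|f-g|$ applied on the common near-minimizer set $B$ from the second step, and this vanishes by uniform convergence of $u_n$ to $u$ on the compact set $B$. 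The case $t=0$ is handled identically using $T^-_0 u=u$ and the first-step bound $|T^-_{t_n}u_n(x)-u_n(x)|\leq \mu\,t_n^{(1-\kappa)/(1+\kappa)}$.

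The main obstacle is the localization step. It is exactly Theorem \ref{potencial.holder} that prevents a dominated function from decaying faster than $\norm{y}^{1-\kappa}$ at infinity, ensuring that the quadratic kinetic term in the Lax--Oleinik infimum dominates uniformly over $u\in\calH$, so that near-minimizers cannot escape as $u$ varies in $\calH$. Without this control (as would be the case if we only knew that dominated functions are continuous), the infimum could fail to be attained on a common compact set and the continuity in $u$ would break down.
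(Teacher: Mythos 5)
Your proof is correct and follows essentially the same route as the paper: localization of the infimum to a compact set, uniformly over $u\in\calH$, via the kinetic lower bound of Lemma \ref{phit.mayor.kd2t} against the H\"{o}lder bound of Theorem \ref{potencial.holder} and the upper bound $\phi(x,x,t)\leq\mu\,t^{(1-\kappa)/(1+\kappa)}$; then the elementary $\abs{\inf f-\inf g}\leq\sup\abs{f-g}$ estimate for continuity in $u$, and the semigroup property combined with Corollary \ref{bound.phixxt} for continuity in $t$. The only (harmless) differences are that you localize uniformly over $t\in[a,b]$ and assemble joint continuity explicitly by a triangle inequality, where the paper treats the two variables separately.
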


We will use the following lemma.

\begin{lemma}\label{phit.mayor.kd2t} For all $x,y\in E^N$ and $T>0$ we have
$\phi(x,y,T)\geq (m/2T)\norm{x-y}^2$, where
$m=\min\set{m_1,\dots,m_N}$.
\end{lemma}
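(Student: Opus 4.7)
The plan is to estimate the action $A(\gamma)$ from below for an arbitrary curve $\gamma\in\calC(x,y,T)$ and then take the infimum. Since $U_\kappa\geq 0$ on $E^N$, the potential contribution to the action is nonnegative and may be discarded, so it suffices to bound the kinetic term $\frac{1}{2}\int_0^T\abs{\dot\gamma(t)}^2\,dt$ from below by $(m/2T)\norm{x-y}^2$.

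Write $\gamma=(\gamma_1,\dots,\gamma_N)$ and recall $\abs{\dot\gamma(t)}^2=\sum_{i=1}^N m_i\norm{\dot\gamma_i(t)}_E^2$. For each index $i$, the Cauchy--Schwarz inequality applied to $\norm{\dot\gamma_i(\cdot)}_E$ and the constant function $1$ on $[0,T]$ yields
$$\int_0^T\norm{\dot\gamma_i(t)}_E^2\,dt\;\geq\;\frac{1}{T}\left(\int_0^T\norm{\dot\gamma_i(t)}_E\,dt\right)^2\;\geq\;\frac{1}{T}\,\norm{\gamma_i(T)-\gamma_i(0)}_E^2\,,$$
where the second inequality uses absolute continuity of $\gamma_i$ and the triangle inequality for the Bochner integral. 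Summing over $i$ with weights $m_i$ gives
$$\frac{1}{2}\int_0^T\abs{\dot\gamma(t)}^2\,dt\;\geq\;\frac{1}{2T}\sum_{i=1}^N m_i\,\norm{s_i-r_i}_E^2\,.$$

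Now choose the index $i_0$ at which the sup-norm is attained, so that $\norm{s_{i_0}-r_{i_0}}_E=\norm{y-x}$ by the definition of $\norm{\cdot}$ on $E^N$. Dropping all other terms in the sum and using $m_{i_0}\geq m$,
$$A(\gamma)\;\geq\;\frac{1}{2}\int_0^T\abs{\dot\gamma(t)}^2\,dt\;\geq\;\frac{m_{i_0}}{2T}\,\norm{y-x}^2\;\geq\;\frac{m}{2T}\norm{y-x}^2\,.$$
Taking the infimum over $\gamma\in\calC(x,y,T)$ gives the desired inequality $\phi(x,y,T)\geq (m/2T)\norm{x-y}^2$.

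There is essentially no obstacle here: the only subtlety is the justification that an absolutely continuous curve satisfies $\norm{\gamma_i(T)-\gamma_i(0)}_E\leq\int_0^T\norm{\dot\gamma_i(t)}_E\,dt$, which is immediate from $\gamma_i(T)-\gamma_i(0)=\int_0^T\dot\gamma_i(t)\,dt$ and the triangle inequality. If $A(\gamma)=+\infty$ there is nothing to prove, so we may restrict to curves of finite action, which lie in $H^1$ as noted earlier in the paper, and all the integrals above are well defined and finite.
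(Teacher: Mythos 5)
Your proof is correct and follows essentially the same route as the paper: discard the nonnegative potential term, apply the Cauchy--Schwarz inequality to each coordinate curve to get $\int_0^T\norm{\dot\gamma_i}_E^2\,dt\geq T^{-1}\norm{\gamma_i(T)-\gamma_i(0)}_E^2$, and then isolate the index realizing the sup-norm $\norm{x-y}$. The only cosmetic difference is that you sum over all bodies before dropping to the extremal index, whereas the paper selects that index from the start.
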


\begin{proof} Let $r,s\in E$ and $\sigma:[0,T]\to E$ an absolutely
continuous curve such that $\sigma(0)=r$ and $\sigma(T)=s$. We
observe that
$$\norm{r-s}_E\leq \int_0^T\norm{\dot\sigma(t)}_E\,dt\leq
T^{1/2}\left(\int_0^T\norm{\dot\sigma(t)}_E^2\,dt\right)^{1/2}\,,$$
hence $$\norm{r-s}_E^2\leq
T\,\int_0^T\norm{\dot\sigma(t)}_E^2\,dt\,.$$ If $x=(r_1,\dots,r_N)$
and $y=(s_1,\dots,s_N)$ are two configurations, then we can choice
$i\in\set{1,\dots,N}$ such that $\norm{r_i-s_i}_E=\norm{x-y}$. Take
now $\gamma=(\gamma_1,\dots,\gamma_N)\in \calC(x,y,T)$. By the
previous observation we have,
$$A(\gamma)\geq
(m_i/2)\,\int_0^T\norm{\dot\gamma_i(t)}_E^2\,dt\geq
(m_i/2T)\,\norm{r_i-s_i}_E^2\geq (m/2T)\,\norm{x-y}^2\,,$$ which
proves the lemma since $\phi(x,y,T)=\inf\set{A(\gamma)\tq
\gamma\in\calC(x,y,T)}$. \end{proof}\medbreak

\begin{proof}[Proof of proposition \ref{semigroup.cont}.] As a first step,
we show that given $R>0$ and $t>0$, there is a constant $k(R,t)>0$
such that
$$T^-_tu(x)=\inf\set{u(y)+\phi(x,y,t)\tq \norm{y-x}\leq k(R,t)}$$
for all $u\in\calH$ and all $x\in E^N$ with $\norm{x}\leq R$. To
see this, fix $R>0$, $t>0$, $u\in\calH$ and $x\in E^N$ such that
$\norm{x}\leq R$. Suppose that $y\in E^N$ is such that
$\norm{y-x}>1$ and $u(y)+\phi(x,y,t)\leq u(x)+\phi(x,x,t)$. Then, by
lemma \ref{phit.mayor.kd2t} and theorem \ref{potencial.holder} we
have $$\frac{m}{2t}\,\norm{y-x}^2\leq
\eta\,\norm{y-x}^{1-\kappa}+\phi(x,x,t)\,.$$ Therefore, using that
$\norm{y-x}>1$ and theorem \ref{upper.bound} we deduce
$$m\,\norm{y-x}^2 \leq
2\eta\,t\,\norm{y-x}+2\alpha R^2+2\beta\,t^2R^{-2\kappa}\,,$$ hence
that $\norm{y-x}\leq k_0(R,t)$ where
$$k_0(R,t)=\eta\,t/m+\left(\eta^2t^2/m^2
+2\alpha\,R^2/m+2\beta\,t^2R^{-2\kappa}/m\right)^{1/2}\,.$$ Setting
$k(R,t)=\max\set{1,k_0(R,t)}$, it follows that $u(y)+\phi(x,y,t)>
u(x)+\phi(x,x,t)$ for all $y\in E^N$ such that $\norm{y-x}>k(R,t)$,
and we conclude that
\begin{eqnarray*}
T^-_tu(x)&=&\inf\set{u(y)+ \phi(x,y,t)\tq y\in E^N}\\&=&
\inf\set{u(y)+ \phi(x,y,t)\tq \norm{y-x}\leq k(R,t)}\,.
\end{eqnarray*}

Let now $u,v\in \calH$ and $t>0$. Let $K\subset E^N$ be a compact
subset, and $R>0$ such that $\norm{x}\leq R$ for all $x\in K$. If we
set
\[K_t=\bigcup_{x\in\, K}\set{y\in E^N\tq \norm{y-x}\leq k(R,t)}\,,\]
then for all $x\in K$ we have
\[T^-_tv(x)=\inf\set{v(y)+\phi(x,y,t)\tq y\in K_t}\,.\]
On the other hand, since
\[v(y)\leq u(y)+\sup\set{\abs{u(y)-v(y)}\tq y\in K_t}\]
for all $y\in K_t$, we
deduce that $T^-_tv(x)\leq \inf\set{u(y)+\phi(x,y,t)\tq y\in K_t}+
\sup\set{\abs{u(y)-v(y)}\tq y\in K_t}$. Thus we have proved that
$T^-_tv(x)-T^-_tu(x)\leq\sup\set{\abs{u(y)-v(y)}\tq y\in K_t}$ for
all $x\in K$. Hence we have that
$$\abs{T^-_tv(x)-T^-_tu(x)}\leq\sup\set{\abs{u(y)-v(y)}\tq y\in K_t}$$
for all $x\in K$. Since the subset $K_t\subset
E^N$ is compact, this implies the continuity of the map $T^-_t$ for each $t\geq 0$.
It remains to prove the continuity of $T^-_t$ with respect to $t$.
Since $(T^-_t)_{t\geq 0}$ is a semigroup it suffices to prove the continuity at $t=0$.
Given $u\in\calH$, we have by corollary \ref{bound.phixxt} that
$$0 \leq T^-_tu(x)-u(x)\leq \phi(x,x,t)\leq \mu\, t^{(1-\kappa)/(1+\kappa)}$$ for all $x\in E^N$.
Therefore, $T^-_tu$ converges uniformly to $u$ when $t\to 0$. As we have said,
using the semigroup property we can deduce that $T^-_tu$ converges uniformly to
$T^-_{t_0}u$ when $t\to t_0$ for all $t_0\geq 0$.
\end{proof}\medbreak

\subsection{Proof of theorem \ref{wkam.thm}.}

\begin{proof} Let $\widehat{\calH}$ be the quotient space of $\calH$ by the
subspace of constants functions. Thus, $\widehat{\calH}$ is
homeomorphic to $\calH_0=\set{u\in\calH\tq u(0)=0}$. By theorem
\ref{potencial.holder} we have that dominated functions are
uniformly equicontinuous. It follows that $\calH_0$ is compact by
Ascoli's theorem. Therefore, $\widehat{\calH}$ is a compact, convex,
and nonempty subset of $\widehat{C^0}(E^N,\R)$, the quotient of the
vector space $C^0(E^N,\R)$ by the subspace of constant functions.
Notice that $\widehat{C^0}(M,\R)$ is endowed with the quotient
topology of the compact open topology on $C^0(M,\R)$. In particular,
$\widehat{C^0}(M,\R)$ is a locally convex topological vector space.

Since $T^-_t(u+c)=T^-_tu+c$ for all $c\in\R$, it is clear that the
semigroup $T^-$ defines canonically a continuous semigroup
$\widehat{T}^-_t:\widehat{\calH}\to\widehat{\calH}$. If we apply the
Schauder-Tykhonov theorem, see \cite{Dug} pages 414--415, we
conclude that $\widehat{T}^-_t$ has a fixed point in
$\widehat{\calH}$. That is to say, there is a function $u\in\calH$
such that $T^-_tu=u+c(t)$ for some function $c:[0,+\infty)\to\R$.
The semigroup property and the continuity of $T^-$ imply that
$c(t)=c(1)t$. Since $u\in\calH$, we have that $u\leq T^-_tu$ for
all $t\geq0$, hence we must have $c(1)\geq 0$. We will prove that
$c(1)=0$. Notice that $T^-_tu=u+c(1)t$ implies $u(x)-u(y)\leq
\phi(x,y,t)-c(1)t$ for all $x,y\in E^N$. Hence, by theorem
\ref{upper.bound} we have that $$u(x)-u(y)\leq
\alpha\,\frac{R^2}{t}+\left(\frac{\beta}{R^{\,2\kappa}}-c(1)\right)\,t$$
whenever $x$ and $y$ are contained in a ball of $E$ of radius $R>0$.
Since this must be true for $R$ and $t$ arbitrary large, we conclude
that $c(1)=0$. Therefore $T^-_tu=u$ for all $t\geq 0$.

It remains to prove that there are fixed points of $T^-$ which are
invariant by the group of symmetries. This can be done as in
\cite{Maderna} as follows. We define the $\calH_{inv}$ as the set
of functions in $\calH$ which are invariant by symmetries. Thus
$\calH_{inv}$ is also convex, closed and nonempty since constant
functions are invariant. Moreover, $\calH_{inv}$ is stable by the
Lax-Oleinik semigroup. Therefore, the quotient of this set by the
subspace of constants functions is also compact, convex, nonempty
and stable by the induced semigroup $\widehat{T}^-$. With the same
arguments as above we obtain an invariant fixed point. \end{proof}\medbreak

\subsection{Viscosity solutions and subsolutions.}

It is well known that the notion of dominated function is related to
a notion of subsolution of the Hamilton-Jacobi equation, namely the
notion of viscosity subsolution. On the other hand, viscosity
solutions (see below) can be detected as fixed points, modulo
constants, of the Lax-Oleinik semigroup. An introduction to the
subject of viscosity solutions can be found for instance in the
books \cite{BarC-D}, \cite{Barles} or \cite{Evans}. However, our
setting presents some technical differences, essentially due to the
fact that the potential function is infinite in the set of
configurations with collisions. The following is a little adaptation
of some results in section 5 of \cite{Fathi-Maderna}.

Recall that $u:E^N\to\R$ is a \textsl{viscosity subsolution} at
$x_0\in E^N$ of (HJ), if for each $C^1$ function $\psi:E^N\to\R$
such that $x_0$ is a maximum of $u-\psi$ we have
$\abs{d_{x_0}\psi}^2\leq 2\,U_\kappa(x_0)$. Given $V\subset E^N$, we
say that $u$ is a viscosity subsolution in $V$ if it is viscosity
subsolution at each $x\in V$. We remark that any function is
trivially a viscosity subsolution in $\Omega^{\,c}$, where
$\Omega\subset E^N$ denotes the set of configurations without
collisions.

Analogously, a function $u:E^N\to\R$ is said to be a
\textsl{viscosity supersolution} at $x_0\in E^N$ of (HJ), if for
each $C^1$ function $\psi:E^N\to\R$ such that $x_0$ is a minimum of
$u-\psi$ we have $\abs{d_{x_0}\psi}^2\geq 2\,U_\kappa(x_0)$. If
$x_0\in\Omega^{\,c}$, then $u$ is a viscosity supersolution at $x_0$
if and only if there are no $C^1$ functions $\psi$ such that $x_0$
is a minimum of $u-\psi$. As for subsolutions, given $V\subset E^N$,
we say that $u$ is a viscosity supersolution in $V$ if it is
viscosity supersolution at each $x\in V$.

We say that a continuous function $u:E^N\to\R$ is a
\textsl{viscosity solution} of (HJ) in $V\subset E^N$ if it is both
a subsolution and a supersolution in $V$. It is not difficult to see
that a viscosity solution $u$ satisfies (HJ) at each point $x\in V$
where the derivative $d_xu$ exists. We will prove the following.

\begin{proposition}\label{viscosidad} (1) Any $u\in\calH$
is almost everywhere differentiable and a viscosity subsolution of
Hamilton-Jacobi in $E^N$.\\ (2) If $u\in\calH$ is a fixed point of
the Lax-Oleinik semigroup, then $u$ is a viscosity solution of
Hamilton-Jacobi.
\end{proposition}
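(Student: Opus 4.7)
The plan for part (1) is to handle the two claims separately. For almost-everywhere differentiability, proposition \ref{phi.loc.lip} gives that any $u\in\calH$ is locally Lipschitz on the open set $\Omega$, whose complement $\Omega^c$ is a finite union of proper linear subspaces of $E^N$ and hence Lebesgue negligible; Rademacher's theorem then yields differentiability almost everywhere on $E^N$. For the subsolution property, the case $x_0\in\Omega^c$ is trivial as noted in the text, so I fix $x_0\in\Omega$ and a $C^1$ test function $\psi$ with $x_0$ a local maximum of $u-\psi$. Given a direction $v\in E^N$ and $t>0$ small enough that the linear path $\gamma(r)=x_0+(r-t)v$, $r\in[0,t]$, remains in $\Omega$, combining domination with the maximum condition gives
\[\psi(x_0)-\psi(x_0-tv)\leq u(x_0)-u(x_0-tv)\leq A(\gamma)=\frac{t}{2}\abs{v}^2+\int_0^t U_\kappa(x_0+(r-t)v)\,dr.\]
Dividing by $t$ and letting $t\to 0^+$, using continuity of $U_\kappa$ at $x_0\in\Omega$, yields $\langle d_{x_0}\psi,v\rangle\leq \frac{1}{2}\abs{v}^2+U_\kappa(x_0)$; taking the supremum over $v$ (Legendre duality of $\frac{1}{2}\abs{\cdot}^2$) produces the subsolution inequality $\abs{d_{x_0}\psi}^2\leq 2U_\kappa(x_0)$.

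For part (2) the first task is the supersolution property at points of $\Omega$. Since $u$ is a fixed point, the infimum defining $T^-_tu(x_0)=u(x_0)$ is achieved: the localization argument from the proof of proposition \ref{semigroup.cont} restricts it to a compact set on which $u$ is continuous and $\phi(x_0,\cdot,t)$ is lower semicontinuous, and existence of globally minimizing curves (mentioned in the introduction) then furnishes $y_t$ and $\gamma_t\in\calC(x_0,y_t,t)$ with $A(\gamma_t)=\phi(x_0,y_t,t)$. A standard splitting argument using domination on the two subcurves shows that $\gamma_t$ is calibrated on every subinterval, namely $u(x_0)-u(\gamma_t(s))=A(\gamma_t|_{[0,s]})$ for all $s\in[0,t]$. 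For $x_0\in\Omega$, the curve $\gamma_t$ stays in $\Omega$ near $s=0$, solves Euler--Lagrange there, and is smooth up to $s=0^+$ with a well-defined velocity $w_t$. Given $\psi\in C^1$ with $x_0$ a minimum of $u-\psi$, the minimum condition and calibration give $A(\gamma_t|_{[0,s]})\leq \psi(x_0)-\psi(\gamma_t(s))$; dividing by $s$ and sending $s\to 0^+$ produces $\frac{1}{2}\abs{w_t}^2+U_\kappa(x_0)\leq -\langle d_{x_0}\psi,w_t\rangle$. Setting $v=-w_t$ and using Legendre duality as before then yields $\abs{d_{x_0}\psi}^2\geq 2U_\kappa(x_0)$.

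The main obstacle will be the supersolution property at a collision configuration $x_0\in\Omega^c$, where by the definition recalled in the text it suffices to show that no $C^1$ function can touch $u$ from below at $x_0$. The plan is a contradiction argument: assume such a $\psi$ exists and construct the calibrated $\gamma_t$ as above. Local Lipschitz continuity of $\psi$ combined with the calibration equality yields $A(\gamma_t|_{[0,s]})\leq \psi(x_0)-\psi(\gamma_t(s))\leq C\,\abs{\gamma_t(s)-x_0}$ for small $s$, while Cauchy--Schwarz in the Hilbert norm gives $\abs{\gamma_t(s)-x_0}\leq (2s\,A(\gamma_t|_{[0,s]}))^{1/2}$; feeding this back produces the linear bound $A(\gamma_t|_{[0,s]})\leq 2C^2 s$, so in particular $\int_0^s U_\kappa(\gamma_t(r))\,dr\leq 2C^2 s$. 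On the other hand $\gamma_t(r)\to x_0\in\Omega^c$ as $r\to 0^+$ and $U_\kappa$ is lower semicontinuous with $U_\kappa(x_0)=+\infty$, forcing $U_\kappa(\gamma_t(r))\to+\infty$ and hence $\frac{1}{s}\int_0^s U_\kappa(\gamma_t(r))\,dr\to+\infty$, contradicting the uniform linear bound. The delicate point is precisely this blow-up step: calibrated curves reaching a collision are only known a priori to be absolutely continuous at $s=0$, so the argument must go through the integral identity above rather than through any pointwise regularity of $\gamma_t$ at the endpoint, which is why Marchal's theorem is not invoked.
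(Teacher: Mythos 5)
Your proposal is correct and follows essentially the same route as the paper: Rademacher plus Proposition \ref{phi.loc.lip} for differentiability, the linear-path/Legendre-duality computation for the subsolution property, and for supersolutions the construction of a calibrated curve from the fixed-point identity, with the same Lipschitz-plus-Cauchy--Schwarz bootstrap ($A(\gamma|_{[0,s]})\leq 2K^2 s$) ruling out collision configurations. The only cosmetic difference is the order of the two cases in part (2); the paper derives $U_\kappa(x_0)\leq 2K^2$ directly rather than phrasing it as a contradiction, but the content is identical.
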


\begin{proof} The fact that dominated functions are differentiable
almost everywhere follows from proposition \ref{phi.loc.lip},
the fact that collisions are contained in a finite number of
affine subspaces and the Rademacher's theorem.
In order to prove that they are viscosity
subsolutions, take $u\in\calH$ and $\psi:E^N\to\R$ of class $C^1$
such that $u-\psi$ admits a maximum at some $x_0\in E^N$. Let $v\in
E^N$. For all $t>0$ we have $$\psi(x_0)-\psi(x_0-tv)\leq
u(x_0)-u(x_0-tv)\leq
\frac{1}{2}\int_{-t}^0\abs{v}^2ds+\int_{-t}^0U_\kappa(x_0+sv)\,ds\,.$$
Dividing by $t$ and taking the limit for $t\to 0$ we obtain
$$d_{x_0}\psi(v)\leq \frac{1}{2}\,\abs{v}^2+U_\kappa(x_0)\,.$$ If we
define $p_1,\dots,p_N\in E$ by the condition
$d_{x_0}\psi(v)=\sum_{i=1}^N <p_i,v_i>_E$ for all
$v=(v_1,\dots,v_N)\in E^N$, then we can write
$$\abs{d_{x_0}\psi}^2=\sum_{i=1}^Nm_i^{-1}\norm{p_i}^2_E=d_{x_0}\psi(w)$$
where $w=(w_1,\dots,w_N)$ and $m_iw_i=p_i$ for all $i=1,\dots,N$.
Therefore, using the last inequality with $v=w$ we obtain
$\abs{d_{x_0}\psi}^2\leq 2\,U_\kappa(x_0)$.

It remains to prove (2).
Suppose that $u\in\calH$ is such that
$T^-_tu=u$ for all $t>0$. Let $\psi:E^N\to\R$ be a $C^1$ function
such that $u-\psi$ has a minimum at some $x_0\in\Omega$.

With the same arguments as in the proof of proposition
\ref{semigroup.cont}, we deduce that there is a constant $k>0$ such
that
\[T^-_1u(x_0)=\inf\set{u(y)+\phi(x_0,y,1)\tq \norm{y-x_0}\leq k}\,.\]
Therefore, using theorem \ref{upper.bound} and the lower
semi-continuity of the Lagrangian action we can choose $y_0\in E^N$
such that $\norm{y_0-x_0}\leq k$ and a curve
$\gamma\in\calC(x_0,y_0,1)$ such that
$$u(x_0)=T^-_1u(x_0)=u(y_0)+\frac{1}{2}\int_0^1\abs{\dot\gamma(t)}^2dt+
\int_0^1U_\kappa(\gamma(t))\,dt\,.$$ In particular, since $u$ is a
dominated function we must have
$$u(x_0)-u(\gamma(t))=\frac{1}{2}\int_0^t\abs{\dot\gamma(s)}^2ds+
\int_0^tU_\kappa(\gamma(s))\,ds$$ for all $t\in [0,1]$, which says
that $\gamma$ is a calibrated curve for $u$.
Using that $x_0$ is a minimum for $u-\psi$ we conclude that $$\psi(x_0)-\psi(\gamma(t))\geq
u(x_0)-u(\gamma(t))=\frac{1}{2}\int_0^t\abs{\dot\gamma(s)}^2ds+
\int_0^tU_\kappa(\gamma(s))\,ds\,.$$

At this point we can prove that $x_0$ is a configuration without collisions.
In other words, $u$ is a viscosity supersolution at each collision
configuration $x_0$ because there are no $C^1$ test functions $\psi$
such that $u-\psi$ has a minimum in $x_0$. To see this we proceed as follows:

Let $K>0$ be a Lipschitz constant for the restriction of $\psi$ to some neighborhood of $x_0$.
Using the fact that $U_\kappa>0$ in the last inequality we deduce that
$$K\;\abs{\gamma(t)-\gamma(0)}\geq \frac{1}{2}\int_0^t\abs{\dot\gamma(s)}^2ds$$
for any $t>0$ small enough. Applying the Cauchy-Schwarz inequality we also have
$$t\;\int_0^t\;\abs{\dot\gamma(s)}^2\,ds\;\geq\;
\left(\int_0^t\abs{\dot\gamma(s)}\,ds\right)^2
\;\geq \abs{\gamma(t)-\gamma(0)}^2$$
hence
$$\abs{\gamma(t)-\gamma(0)}\leq 2Kt\;.$$
Using again the Lipschitz constant for $\psi$ and the previous inequality,
but neglecting this time the kinetic term, we obtain
$$2t\,K^2 \geq \int_0^t U_\kappa(\gamma(s))\,ds\;.$$
Dividing by $t$ and taking the
limit for $t\to 0$ we deduce that $U_\kappa(x_0)\leq 2K^2$,
meaning that $x_0\in\Omega$.
Since $x_0$ is a configuration without collisions,
there is $\delta>0$ such that $\gamma([0,\delta])\subset\Omega$.
Since $\gamma$ is minimizing, hence a solution of the Euler-Lagrange flow in $[0,\delta]$ we know that $\gamma$ is differentiable at $t=0$.
Dividing by $t$ and taking the
limit for $t\to 0$ in the inequality
$$\psi(x_0)-\psi(\gamma(t))\geq \frac{1}{2}\int_0^t\abs{\dot\gamma(s)}^2ds+
\int_0^tU_\kappa(\gamma(s))\,ds$$
we obtain $$d_{x_0}\psi(v)\geq
\frac{1}{2}\,\abs{v}^2+U_\kappa(x_0)\,,$$ where $v=-\dot\gamma(0)$.
On the other hand, always we have $2\,p(v)\leq\abs{p}^2+\abs{v}^2$
for $p\in (E^*)^N$ and $v\in E^N$. Thus we conclude that $\abs{d_{x_0}\psi}^2\geq
2\,U_\kappa(x_0)$. We have proved that $u$ is a viscosity
supersolution at $x_0$.
\end{proof}\medbreak

\subsection{Lax-Oleinik and weak KAM solutions.}

Following the analogy with the weak KAM theory for Tonelli Lagrangians on compact manifolds, we show that the fixed points of the Lax-Oleinik semigroup are the weak KAM solutions defined by Fathi in \cite{Fathi-CRAS1}. More precisely, we show that the fixed points of Lax-Oleinik semigroup are characterized by the following property: given any configuration $x\in E^N$, always we have a calibrated curve $\gamma_x:(-\infty,0]\to E^N$ such that $\gamma_x(0)=x$.

Recall that if $u:E^N\to\R$ is a dominated function, then a curve $\gamma:I\to E^N$ is said to be calibrated when satisfies $$u(\gamma(b))-u(\gamma(a))=A(\gamma|_{[a,b]})$$ for all compact interval $[a,b]\subset I$. In particular, the calibrated curves of a dominated function are free time minimizers, meaning that $$A(\gamma|_{[a,b]})=\phi(\gamma(a),\gamma(b))$$ for all compact interval $[a,b]\subset I$.

Therefore, the fixed points of the Lax-Oleinik semigroup can be characterized in terms of calibrated curves as follows (recall that our Lagrangian is symmetric).

\begin{proposition}\label{fixpoints=calib}
 Let $u\in\calH$ be a dominated function. Then $u=T^-_tu$ for all $t>0$ if and only if, for each $x\in E^N$ there is a curve $\gamma_x:[0,+\infty)\to E^N$ with $\gamma_x(0)=x$ and such that $u(x)=u(\gamma_x(t))+A(\gamma|_{[0,t]})$ for all $t>0$.
\end{proposition}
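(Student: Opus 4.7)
The easy direction $(\Leftarrow)$ is immediate: given such a curve $\gamma_x$ for each $x$, one has $T^-_tu(x)\le u(\gamma_x(t))+\phi(x,\gamma_x(t),t)\le u(\gamma_x(t))+A(\gamma_x|_{[0,t]})=u(x)$, while $u\in\calH$ already implies $u\le T^-_tu$, so equality holds. The substance of the proof lies in $(\Rightarrow)$, which I will attack by a diagonal extraction procedure.

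For each positive integer $n$ I first realize the identity $u=T^-_nu$ by an honest minimizer. The argument already used in the proof of Proposition \ref{semigroup.cont} confines the infimum $\inf_y\{u(y)+\phi(x,y,n)\}$ to a compact set of candidate endpoints $y$, on which $u$ is continuous and $\phi(x,\cdot,n)$ is lower semi-continuous (via lower semi-continuity of the Lagrangian action together with the existence of minimizers in $\calC(x,y,n)$), so the infimum is attained at some $y_n$. Lower semi-continuity of the action then yields $\gamma_n\in\calC(x,y_n,n)$ with $A(\gamma_n)=\phi(x,y_n,n)$. A short telescoping argument using $u\in\calH$ forces calibration on every subinterval: $u(\gamma_n(a))-u(\gamma_n(b))=A(\gamma_n|_{[a,b]})$ for $0\le a<b\le n$.

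Next I extract a subsequential limit. On $[0,k]$ the family $\{\gamma_n|_{[0,k]}\}_{n\ge k}$ is uniformly bounded: calibration gives $A(\gamma_n|_{[0,k]})=u(x)-u(\gamma_n(k))$, Theorem \ref{potencial.holder} bounds this by $\eta\,\norm{x-\gamma_n(k)}^{1-\kappa}$, while Lemma \ref{phit.mayor.kd2t} bounds it below by $(m/2k)\norm{\gamma_n(k)-x}^2$, forcing $\norm{\gamma_n(k)-x}\le C(k)$; the same reasoning applies at every intermediate time. Cauchy--Schwarz applied to the resulting kinetic-energy bound yields uniform $1/2$-H\"{o}lder equicontinuity on $[0,k]$. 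Ascoli's theorem together with a Cantor diagonal across $k=1,2,\dots$ then produces a subsequence $\gamma_{n_j}$ converging uniformly on compact subsets of $[0,+\infty)$ to a curve $\gamma_x$ with $\gamma_x(0)=x$.

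To conclude, for any $t>0$ lower semi-continuity of the action gives $A(\gamma_x|_{[0,t]})\le\liminf_j A(\gamma_{n_j}|_{[0,t]})$; calibration of each $\gamma_{n_j}$ together with continuity of $u$ rewrites the right-hand side as $u(x)-u(\gamma_x(t))$, and the reverse inequality holds since $u\in\calH$ and $\phi(x,\gamma_x(t),t)\le A(\gamma_x|_{[0,t]})$. Hence $u(x)=u(\gamma_x(t))+A(\gamma_x|_{[0,t]})$ for every $t>0$, as required. I expect the main obstacle to be the attainment step: while the compactness of the relevant search region is already in hand from Proposition \ref{semigroup.cont}, the lower semi-continuity of $\phi(x,\cdot,n)$ and the actual production of the minimizing curve $\gamma_n$ rest on a compactness/lower semi-continuity argument for the Lagrangian action on $H^1([0,n],E^N)$, which is the technical heart of the construction; the subsequent extraction and passage to the limit are comparatively standard.
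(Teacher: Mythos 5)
Your proposal is correct and follows essentially the same route as the paper: both directions are handled identically, and the hard implication proceeds by realizing $u=T^-_nu$ with an action minimizer $\gamma_n\in\calC(x,y_n,n)$ for each integer $n$ (confining the endpoint via Lemma \ref{phit.mayor.kd2t} and Theorem \ref{potencial.holder} exactly as in Proposition \ref{semigroup.cont}), deducing calibration on subintervals, obtaining uniform action bounds and $1/2$-H\"{o}lder equicontinuity on each $[0,k]$, and extracting a locally uniform limit by Ascoli plus a diagonal argument, with lower semi-continuity of the action transferring calibration to the limit curve. The only differences are cosmetic, e.g.\ your explicit two-sided estimate forcing $\norm{\gamma_n(k)-x}\leq C(k)$, which the paper leaves as an appeal to the same two lemmas.
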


\begin{proof} Suppose first that the condition is satisfied. Take $x\in E^N$ and the corresponding calibrated curve $\gamma_x:[0,+\infty)\to E^N$ with $\gamma_x(0)=x$. Since $u\in\calH$ already we known that $u\leq T^-_tu$ for all $t>0$. On the other hand, if we fix $t>0$ we have $T^-_tu(x)\leq u(\gamma_x(t))+A(\gamma|_{[0,t]})=u(x)$. Therefore $u$ is a fixed point.

Suppose now that $u\in\calH$ is a fixed point of $(T^-_t)$. Given a configuration $x\in E^N$ and $t>0$ we have $$u(x)=T^-_tu(x)=\inf\set{u(y)+\phi(x,y,t)\mid y\in E^N}\;.$$ Using lemma \ref{phit.mayor.kd2t} and theorem \ref{potencial.holder} (as in the proof of proposition \ref{semigroup.cont}) we deduce that there is a constant $k>0$ (depending on $x$ and $t$) such that $$u(x)=T^-_tu(x)=\inf\set{u(y)+\phi(x,y,t)\mid y\in E^N\textrm{ and }\norm{y-x}\leq k}\;.$$ Therefore, using theorem \ref{upper.bound} and the lower semi-continuity of the Lagrangian action we can choose $y(x,t)\in E^N$
such that $\norm{y(x,t)-x}\leq k$ and a curve
$\gamma_{x,t}\in\calC(x,y(x,t),t)$ such that
$$u(x)=T^-_tu(x)=u(y(x,t))+A(\gamma_{x,t}).$$ For each positive integer $n>0$ we define the curve $\gamma_n:[0,n]\to E^N$ as the curve $\gamma_{x,n}$. Observe that if $m>n$ then $\gamma_m\mid_{[0,n]}$ minimizes the action in $\calC(x,\gamma_m(n),n)$. Now we apply theorem \ref{upper.bound} and once again lemma \ref{phit.mayor.kd2t} and we deduce that for a fixed positive integer $n>0$, the sequence $(A(\gamma_m\mid_{[0,n]}))_{m>n}$ is bounded. It is not difficult to see (using the Cauchy-Schwartz inequality) that an absolutely continuous curve $\gamma:I\to E^N$ with finite Lagrangian action must satisfies $\abs{\gamma(t)-\gamma(s)}\leq 2A(\gamma)\,\abs{t-s}^{1/2}$ for all $t,s\in I$. Then we can apply Ascoli's theorem and deduce the existence of a convergent subsequence of $(\gamma_m\mid_{[0,n]})_{m>n}$. By a diagonal process we can extract an increasing sequence of indexes $m_k\in \N$ such that, for each positive integer $n>0$, the sequence $(\gamma_{m_k}\mid_{[0,n]})_{m_k>n}$ converges uniformly, when $k\to\infty$.
Observe now that by construction, each curve $\gamma_{m_k}\mid_{[0,n]})_{m_k>n}$ calibrates the function $u$. Therefore the semi-continuity of the action implies that the curve $\gamma_x:[0,+\infty)\to E^N$ defined by $\gamma_x(t)=\lim_{k\to\infty}\gamma_{m_k}(t)$ is also calibrated.
\end{proof}\medbreak

We remark that for the Newtonian potential ($\kappa=1/2$), Marchal's theorem implies -- except of course in the collinear case ($\dim E=1$) -- that the calibrated curves of weak KAM solutions are true motions for $t>0$ since they must be contained in $\Omega$, the set of configurations without collisions. The dynamics of the free time minimizers of the Newtonian N-body problem is described in \cite{DaLuz-Maderna}.

\section{The Kepler problem}

Unfortunately, the proof of the weak KAM theorem do not give any explicit solution.
Nevertheless, we can give explicit solutions for the Hamilton-Jacobi equation of
the Kepler problem.

We will find first isometry invariant solutions when we have two bodies of unit mass
in a line ($N=2$, $m_1=m_2=1$, $k=1$) and a Newtonian potential ($\kappa=1/2$).
An invariant solution $u:\R^2\to\R$ must satisfy $u(x+z,y+z)=u(x,y)$ and
$u(x,y)=u(-x,-y)$ for all $x,y,z\in\R$. Therefore the solution must be of the
form $u(x,y)=f(\abs{x-y})$, and the Hamilton-Jacobi equation reads
$$u_x^2+u_y^2=2\,\abs{x-y}^{-1}\;.$$ Replacing $u(x,y)$ by $f(\abs{x-y})$ and solving the differential equation in $f$ we conclude that the unique invariant global solutions (up to an additive constant) are the functions $$u_\pm(x,y)=\pm\;2\abs{x-y}^{1/2}\;.$$
In fact, the positive solution is the unique invariant fixed point of the \textsl{forward} Lax-Oleinik semi-group $$T^+_tu(x)=\inf\set{u(y)-\phi(x,y,t)\mid y\in E^N}$$ and therefore, the negative one is the unique invariant fixed point of the backward semigroup $T^-_t$. Of course, since the Lagrangian is symmetric in speed, we have that $u\in \calH$ is a backward solution if and only if $-u$ is a forward solution.

\begin{figure}[ht]
\unitlength 1cm
\begin{picture}(6.1289,6.6289)(-0,-0.5)%
\color{black}\thinlines 
\path(0,2.8145)(5.6289,2.8145)
\path(2.8145,0)(2.8145,5.6289)
\path(5.5135,2.8811)(5.6289,2.8145)(5.5135,2.7478)
\path(2.7478,5.5135)(2.8145,5.6289)(2.8811,5.5135)
\thicklines 
\path(0,0.7876)(0.1105,0.8276)(0.2908,0.8949)(0.471,0.9648)(0.6513,1.0373)(0.8315,1.113)(1.0118,1.1922)
(1.1921,1.2754)(1.3723,1.3634)(1.5526,1.4571)(1.7329,1.5578)(1.9131,1.6673)(2.0934,1.7884)(2.2737,1.9259)
(2.4539,2.0889)(2.5441,2.1861)(2.6342,2.3014)(2.7243,2.4517)(2.7694,2.558)(2.8145,2.8145)(2.8595,2.558)
(2.9046,2.4517)(2.9947,2.3014)(3.0849,2.1861)(3.175,2.0889)(3.3553,1.9259)(3.5355,1.7884)(3.7158,1.6673)
(3.8961,1.5578)(4.0763,1.4571)(4.2566,1.3634)(4.4369,1.2754)(4.6171,1.1922)(4.7974,1.113)(4.9776,1.0373)
(5.1579,0.9648)(5.3382,0.8949)(5.5184,0.8276)(5.6289,0.7876)
\thinlines 
\dashline{0.15}(0,4.8413)(0.1105,4.8014)(0.2908,4.734)(0.471,4.6642)(0.6513,4.5916)(0.8315,4.516)(1.0118,4.4368)
(1.1921,4.3535)(1.3723,4.2655)(1.5526,4.1718)(1.7329,4.0711)(1.9131,3.9616)(2.0934,3.8405)(2.2737,3.703)
(2.4539,3.54)(2.5441,3.4428)(2.6342,3.3275)(2.7243,3.1772)(2.7694,3.071)(2.8145,2.8145)(2.8595,3.071)
(2.9046,3.1772)(2.9947,3.3275)(3.0849,3.4428)(3.175,3.54)(3.3553,3.703)(3.5355,3.8405)(3.7158,3.9616)
(3.8961,4.0711)(4.0763,4.1718)(4.2566,4.2655)(4.4369,4.3535)(4.6171,4.4368)(4.7974,4.516)(4.9776,4.5916)
(5.1579,4.6642)(5.3382,4.734)(5.5184,4.8014)(5.6289,4.8413)
\put(5.0045,0.6245){\makebox(0,0){\normalsize $u_-$}}
\put(5.0045,5.0045){\makebox(0,0){\normalsize $u_+$}}
\put(5.4425,2.5225){\makebox(0,0){\normalsize $r$}}
\put(3.3985,5.4425){\makebox(0,0){\normalsize $u(r)$}}
\end{picture}%
\input{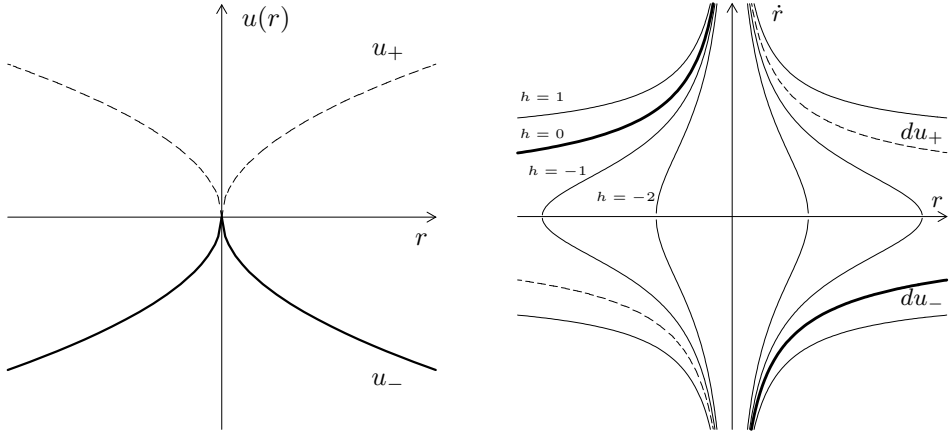}
\caption{The two solutions and their derivatives for the 1-dimensional Kepler problem.}
\end{figure}

It is not difficult to see that we have also solutions invariant under translations,
in particular the function $b_+$ given by $b_+(x,y)=u_-(x,y)$ for $x\geq y$
and $b_+(x,y)=u_+(x,y)$ for $x\leq y$ is also a weak KAM solution.

For the planar Kepler problem it is convenient
to reduce first the problem by fixing
the center of mass at the origin, or equivalently,
to look for translation invariant solutions.
Since the configuration is then determined by the position of
the first body $x\in\R^2$, the problem reduces as usual to the center fix problem.
If we denote $x=(x_1,x_2)$ the position of the body,
then the Hamilton-Jacobi equation reads
$$u_{x_1}^2(x)+u_{x_2}^2(x)=2\norm{x}^{-1}\;.$$

\begin{figure}[h]\label{calibKepler}
\unitlength 1cm
\begin{picture}(6,6)(-0.5,-0.5)%
\color{black}\thinlines 
\put(2.5,2.5){\makebox(0,0){\tiny$\bullet$}}
\path(2.5,0)(2.5,5)
\path(5,2.5)(0,2.5)
\path(1.25,0)(3.75,5)
\path(5,1.25)(0,3.75)
\path(5,3.75)(0,1.25)
\path(3.75,0)(1.25,5)
\path(2.5,0.51)(2.5,0.5)
\path(2.5667,0.6155)(2.5,0.5)(2.4333,0.6155)
\path(0.51,2.5)(0.5,2.5)
\path(0.6155,2.4333)(0.5,2.5)(0.6155,2.5667)
\path(1.5045,0.5089)(1.5,0.5)
\path(1.6113,0.5735)(1.5,0.5)(1.492,0.6331)
\path(4.4911,1.5045)(4.5,1.5)
\path(4.4265,1.6113)(4.5,1.5)(4.3669,1.492)
\path(4.4911,3.4955)(4.5,3.5)
\path(4.3669,3.508)(4.5,3.5)(4.4265,3.3887)
\path(3.4955,0.5089)(3.5,0.5)
\path(3.508,0.6331)(3.5,0.5)(3.3887,0.5735)
\path(1.5045,4.4911)(1.5,4.5)
\path(1.492,4.3669)(1.5,4.5)(1.6113,4.4265)
\path(0.5089,1.5045)(0.5,1.5)
\path(0.6331,1.492)(0.5,1.5)(0.5735,1.6113)
\path(0.5089,3.4955)(0.5,3.5)
\path(0.5735,3.3887)(0.5,3.5)(0.6331,3.508)
\path(3.4955,4.4911)(3.5,4.5)
\path(3.3887,4.4265)(3.5,4.5)(3.508,4.3669)
\path(4.49,2.5)(4.5,2.5)
\path(4.3845,2.5667)(4.5,2.5)(4.3845,2.4333)
\path(2.5,4.49)(2.5,4.5)
\path(2.4333,4.3845)(2.5,4.5)(2.5667,4.3845)
\end{picture}%
\input{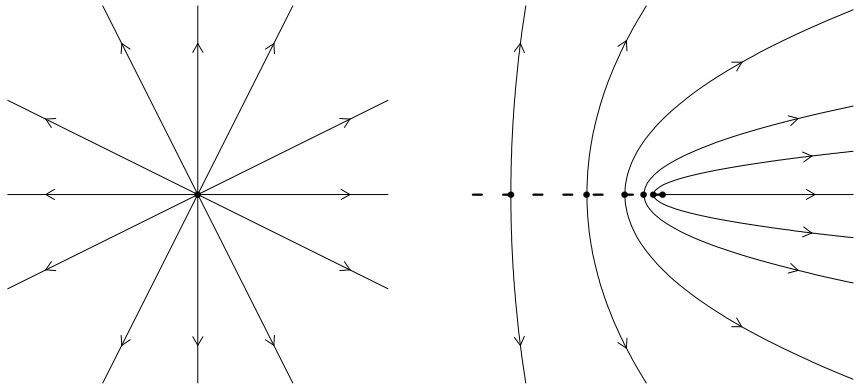}
\caption{Calibrated curves of solutions for the planar Kepler problem.}
\end{figure}

Doubtlessly,
the simplest solution that we can give is  the rotation invariant solution $$u(x_1,x_2)=-(x_1^2+x_2^2)^{1/4}\,.$$ Its calibrated curves are all the parabolic homothetic motions, represented in the left side of figure \ref{calibKepler}. The half parabolas at the right side are the calibrated curves of a Buseman type solution, which is constant and not differentiable over the dashed line. A computation made by A. Venturelli shows that this last solution can be explicitly defined by the formula $$u(x_1,x_2)=-((x_1^2+x_2^2)^{1/2}+x_1)^{1/2}\,.$$

\textsl{Acknowledgements.} I would like to thank the hospitality of CIMAT, Guanajuato,
M\'{e}xico, where this research was done. I am specially grateful to Gonzalo Contreras and Renato Iturriaga for helpful discussions during the visit
of CIMAT. I also want to express my gratitude to Alain Chenciner and Andrea Venturelli,
for many discussions on the subject without which this paper would
not exist. Finally, I like to thank Marc Arcostanzo for pointing a
mistake in a draft version of this paper.

\end{document}